\theoremstyle{plain}
\newtheorem{theorem}{Theorem}
\newtheorem{lemma}{Lemma}
\newtheorem{corollary}{Corollary}
\newtheorem{proposition}{Proposition}
\theoremstyle{definition}
\newtheorem{definition}{Definition}
\newtheorem{example}{Example}
\theoremstyle{remark}
\newtheorem{remark}[theorem]{Remark}
\theoremstyle{plain}
\newtoks\thehProclaim
\newtheorem*{Proclaim}{\the\thehProclaim}
\theoremstyle{definition}
\newtoks{\thehRemark}
\newtheorem*{Remark}{\the\thehRemark}
\renewcommand{\geq}{\geqslant}
\def\R{\mathbb{R}}
\def\D{\mathbb{D}}
\DeclareMathOperator\sign{sign}
\DeclareMathOperator\dist{dist}
\newcommand{\defeq}{:=}
\begin{document}

\title[Polygons with prescribed edge slopes]{Polygons with prescribed edge slopes: configuration space and extremal points of  perimeter}

\author{ Joseph Gordon, Gaiane Panina, Yana Teplitskaya}

\address[Joseph Gordon]{Mathematics \& Mechanics Department, St. Petersburg State University; e-mail: joseph-gordon@yandex.ru}
\address[Gaiane Panina]{Mathematics \& Mechanics Department, St. Petersburg State University; St. Petersburg Department of Steklov Mathematical Institute; e-mail: gaiane-panina@rambler.ru}
\address[Yana Teplitskaya]{Chebyshev Laboratory, St. Petersburg State University, 14th Line V.O., 29B, Saint Petersburg 199178 Russia; e-mail:
janejashka@gmail.com}

\subjclass[2000]{52R70, 52B99}

\keywords{Morse index, critical point,   cyclic polygon, flexible polygon}

\begin{abstract}We describe the configuration space $\mathbf{S}$ of polygons with prescribed edge slopes, and study the
perimeter  $\mathcal{P}$  as a Morse function on $\mathbf{S}$. We characterize  critical points of  $\mathcal{P}$ (these are \textit{tangential} polygons) and compute their Morse indices.  This setup is motivated by a number of results about  critical points and Morse indices of the oriented area function defined on the configuration space of polygons with prescribed edge lengths (flexible polygons). As a by-product, we present an independent computation of the Morse index of the area function (obtained earlier by  G. Panina and A. Zhukova).
\end{abstract}

\maketitle

\section{Introduction}

  Consider the space $\mathbf{L}$ of planar polygons with prescribed edge lengths\footnote{The space $\mathbf{L}$ appears in the literature as ``configuration space of a flexible polygon'', or ``configuration space of a polygonal linkage'', or just as ``space of polygons''.}  and the oriented area $\mathcal{A}$  as a Morse function  defined on it.
  It is known that  generically:
  \begin{itemize}
    \item $\mathbf{L}$  is a smooth closed manifold whose diffeomorphic type depends on the edge lengths \cite{Farber, MilKap}.
    \item  The oriented area $\mathcal{A}$ is a Morse function whose critical points are cyclic configurations (that is, polygons with all the vertices lying on a circle), whose Morse indices are known, see Theorem \ref{Thm_Morse_closed_plane}, \cite{khipan,panzh, zhu}. The Morse index depends not only on the combinatorics of a cyclic polygon, but also on some metric data. Direct computations of the Morse index proved to be quite involved, so the existing proof  comes from bifurcation analysis combined with a number of combinatorial tricks.
    \item Bifurcations of $\mathcal{A}$ are captured by  cyclic polygons $P$ whose dual polygons $P^*$ have zero perimeter \cite{panzh}; see also Lemma \ref{LemmaBifurc}. That is, in a generic one-parametric family of edge lengths a critical point $P$ bifurcates whenever the perimeter of the dual tangential polygon $P^*$  vanishes.
  \end{itemize}
The polygon $P^*$ is \textit{tangential} (see Definition \ref{DefTang}),  so tangential polygons with zero perimeter play a special role in the framework of flexible polygons and oriented area. The initial motivation of the present paper was to clarify
this role.

  In the paper we consider the following problem: instead of prescribing edge lengths, we prescribe the slopes of the edges. Instead of taking the oriented area as a Morse function, we take the oriented perimeter. We prove:
 \begin{itemize}
    \item The space $\mathbf{S}$ of polygons with prescribed edge slopes  is a smooth non-compact  manifold (see Theorem \ref{ThmConfSpace} for its diffeomorphism type).
    \item The (oriented) perimeter $\mathcal{P}$ is a Morse function with either zero or  two critical points (Theorem \ref{ThmCritPer}).
    Critical points of $\mathcal{P}$ are{ tangential} polygons.
    \item  The absence of critical points is captured by existence of tangential polygons with zero perimeter (Corollary  \ref{CorCritPer}). That is, in a generic one-parametric family of slopes critical points disappear whenever the perimeter of the (uniquely defined) tangential polygon vanishes.

        \item  Although there are at most two critical points, these are not necessarily maximum and minimum of the perimeter function. The Morse index of a tangential polygon is expressed in Theorem \ref{ThmMorseTangential}. The proof is based on  direct computation of the leading principal minors of the Hessian matrix.
            \item The Morse index of a tangential polygon depends on the combinatorics of the polygon and the sign of its perimeter only.
            \item Local projective duality provides an alternative proof of Theorem \ref{Thm_Morse_closed_plane}, that is, the formula for the Morse index of a cyclic polygon (rel the area function).
  \end{itemize}

  Yet another motivation of this research is projective duality. Oversimplifying, assume that  the ambient space of the polygons is the sphere $S^2$.
Then projective duality takes polygons with prescribed edge lengths to polygons with prescribed angles. It also takes area to a linear function of perimeter, so the critical polygons in the two settings are mutually projectively dual and have related Morse indices.
(A necessary warning:  there exists only  a local version of projective duality. However it is sufficient for purposes.)

\section*{Acknowledgments}
Sections 3,4, and 5 are supported by the Russian Science Foundation grant N 14-21-00035.

Gaiane Panina is
 supported by the RFBR grant 17-01-00128  and   the Program of the Presidium of the Russian
 Academy of Sciences N 01 'Fundamental Mathematics and its Applications'
 under
 grant PRAS-18-01.

 We are indebted to Alexander Gaifullin who was the first to point out the vanishing perimeter of a bifurcating polygon.
We also thank Mikhail Khristoforov for useful discussions.

\section{Definitions and setups}

A \textit{polygon} is an oriented closed broken line in the plane. We assume that its vertices are numbered in the cyclic order and thus induce an orientation of the polygon.

\begin{definition} \label{Dfn_area} The \textit{oriented area} of a polygon $P$ with the vertices \newline $v_i = (x_i,
y_i)$  is defined by
$$2\mathcal{A}(P) = (x_1y_2 - x_2y_1) + \ldots + (x_ny_1 - x_1y_n).$$

Equivalently, one defines    $$\mathcal{A}(P):=\int_{\mathbb{R}^2} w(P,x)dx,$$

where $w(P,x)$  is the winding number of $P$ around  the point $x$.

\end{definition}

\subsection{Polygons with prescribed edge lengths. Oriented area as a Morse function.}  Assume that a generic $n$-tuple of positive numbers $(l_1,...,l_n)$ is given.
The space of all planar polygons whose consecutive edge lengths are  $(l_1,...,l_n)$ (modulo translations and rotations) is called the \textit{configuration space
of polygons with prescribed edge lengths.} We denote it by $\mathbf{L}=\mathbf{L}(l_1,...,l_n)$.

Generically, $\mathbf{L}$ is a smooth closed manifold  \cite{Farber, MilKap}, and
 the oriented area $\mathcal{A}$  is a  Morse function on $\mathbf{L}$.

\begin{definition}
    A
polygon  $P$  is  \textit{cyclic} if all its vertices $v_i$
lie on a circle.

\end{definition}

\begin{theorem}\label{Thm_crirical_are_cyclic}\cite{khipan}  Generically, $\mathcal{A}$ is a Morse function.
 At smooth points of the space  $\mathbf{L}$, a polygon $P$ is a critical point of the
oriented area  $\mathcal{A}$  iff $P$ is a cyclic configuration.
        \qed
\end{theorem}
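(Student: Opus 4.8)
The plan is to realize $\mathbf{L}$ concretely and reduce criticality to a system of equations with a transparent geometric meaning. First I would parametrize a polygon by the direction angles $\alpha_1,\dots,\alpha_n$ of its edges, so that the $k$-th edge is the vector $l_k u_k$ with $u_k=(\cos\alpha_k,\sin\alpha_k)$. The configuration space is then cut out inside $(S^1)^n$ by the two closure equations $g_1=\sum_k l_k\cos\alpha_k=0$ and $g_2=\sum_k l_k\sin\alpha_k=0$, after quotienting by the global rotation $\alpha_k\mapsto\alpha_k+t$. A direct check shows that the rotation direction $(1,\dots,1)$ is automatically tangent to $\{g_1=g_2=0\}$ and that $\mathcal{A}$ is constant along it, so I may apply Lagrange multipliers on the level set and read off the reduced condition afterwards. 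In these coordinates the oriented area is $2\mathcal{A}=\sum_{i<j} l_i l_j\sin(\alpha_j-\alpha_i)$, which I would verify on a triangle using $e_1+e_2+e_3=0$.

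Next I would write the critical point condition $\partial\mathcal{A}/\partial\alpha_k=\lambda\,\partial g_1/\partial\alpha_k+\mu\,\partial g_2/\partial\alpha_k$ for all $k$. Differentiating the area formula and collecting the terms containing $\alpha_k$ gives
$$2\,\frac{\partial\mathcal{A}}{\partial\alpha_k}=l_k\Big[\sum_{i<k} l_i\cos(\alpha_k-\alpha_i)-\sum_{j>k} l_j\cos(\alpha_k-\alpha_j)\Big].$$
The key simplification is to recognize the partial sums of edge vectors as vertex positions: $\sum_{i<k} l_i u_i=v_{k-1}-v_0$ and, using closure, $\sum_{j>k} l_j u_j=v_0-v_k$. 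Substituting $\cos(\alpha_k-\alpha_i)=u_i\cdot u_k$ and writing $m_k=\tfrac12(v_{k-1}+v_k)$ for the midpoint of edge $k$, the bracket collapses to $2(m_k-v_0)\cdot u_k$, so that $\partial\mathcal{A}/\partial\alpha_k=l_k(m_k-v_0)\cdot u_k$. Setting $O:=v_0+(\mu,-\lambda)$, the whole system reduces to the single geometric condition $(m_k-O)\cdot u_k=0$ for every $k$.

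The interpretation finishes the equivalence. The relation $(m_k-O)\perp u_k$ says precisely that the foot of the perpendicular from $O$ to the line of edge $k$ is its midpoint, i.e. $O$ lies on the perpendicular bisector of that edge and hence $|O-v_{k-1}|=|O-v_k|$. Requiring this for all $k$ forces $|O-v_0|=\cdots=|O-v_n|$, so all vertices lie on a circle centered at $O$, that is, $P$ is cyclic. Conversely, if $P$ is cyclic with circumcenter $O$, choosing $(\mu,-\lambda)=O-v_0$ produces multipliers solving the Lagrange system, so $P$ is critical. This proves that at smooth points of $\mathbf{L}$ criticality is equivalent to being cyclic.

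Finally, for the assertion that $\mathcal{A}$ is Morse for generic $(l_1,\dots,l_n)$, I would invoke the cited fact that $\mathbf{L}$ is a smooth closed manifold generically and then argue nondegeneracy of the restricted Hessian at cyclic configurations. I expect this to be the main obstacle: one must exclude degenerate cyclic polygons by a Sard/transversality argument in the parameter $(l_1,\dots,l_n)$, and then show the Hessian of $\mathcal{A}$ on the tangent space to $\mathbf{L}$ is nonsingular off a measure-zero set of edge lengths. The characterization of critical points above is essentially algebraic and robust, so the genuine analytic work lies entirely in this nondegeneracy step, which is where I would concentrate effort.
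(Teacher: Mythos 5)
Your Lagrange-multiplier computation is correct and self-contained: the area formula $2\mathcal{A}=\sum_{i<j}l_il_j\sin(\alpha_j-\alpha_i)$, the identity $\partial\mathcal{A}/\partial\alpha_k=l_k\,(m_k-v_0)\cdot u_k$, and the fact that the constraint gradients contribute $l_k\,(\mu,-\lambda)\cdot u_k$ all check out, so criticality reduces to $(m_k-O)\perp u_k$ for all $k$, i.e.\ to $O$ lying on every perpendicular bisector, which is exactly cyclicity. The only place needing a word is the degenerate (aligned) configurations, where $\nabla g_1$ and $\nabla g_2$ become proportional and the multiplier argument breaks down; but these are precisely the non-smooth points of $\mathbf{L}$ excluded by the hypothesis, so the characterization is complete. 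Note, however, that the paper does not prove this theorem at all: it is imported from \cite{khipan} with only a citation, so there is no in-house argument to compare against. The closest relative in the text is the proof of Proposition \ref{PropSphCrit} (the spherical analogue), which proceeds quite differently: it settles the convex quadrilateral case by a classical extremal property, treats self-intersecting quadrilaterals by adjoining a fifth vertex on the circumscribed circle and using additivity of the oriented area, and refers to \cite{khipan} for general $n$. Your route avoids induction on the number of edges and makes the circumcenter appear directly as a translate of the Lagrange multiplier vector, which is arguably more transparent; the decomposition route has the advantage of transferring to the sphere, where your explicit trigonometric area formula is unavailable. Finally, be aware that your last paragraph is a plan rather than a proof: the genericity of the Morse condition is only reduced to a transversality statement you do not carry out. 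Since the paper likewise takes that part from the literature, the omission is acceptable, but you should say explicitly that you are proving only the characterization of critical points.
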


Before we recall a formula for the Morse index of a cyclic configuration from \cite{zhu}, \cite{khipan1},
let us fix the following notation for a cyclic polygon $P$.

$\omega_P=w(P,O)$ is the winding number of $P$ with respect to the center $O$ of the circumscribed circle.

$\alpha_i$  is the half of the angle between the vectors
$\overrightarrow{Ov_i}$ and $\overrightarrow{Ov_{i+1}}$. The angle
is defined to be positive, orientation is not involved.

$\varepsilon_i$ is the
orientation of the edge $v_iv_{i+1}$, that is,

 $\varepsilon_i=\left\{
                       \begin{array}{ll}
                         1, & \hbox{if the center $O$ lies to the left of } \overrightarrow{v_iv_{i+1}};\\
                         -1, & \hbox{if  $O$ lies to the right of } \overrightarrow{v_iv_{i+1}}.
                       \end{array}
                     \right.$

$e(P)$ is the number of positive entries in $\varepsilon_1,...,\varepsilon_n$, that is, $e(P)$ is the number of \textit{positively oriented }edges .

$\mu_P=\mu_P(\mathcal{A})$ is the Morse index of the function $\mathcal{A}$ at the point
$P$. That is, $\mu_P(\mathcal{A})$ is the number of negative eigenvalues of the
Hessian matrix $Hess_P(\mathcal{A})$.

\begin{theorem}\label{Thm_Morse_closed_plane}\cite{khipan1}, \cite{panzh}, \cite{zhu} Generically,
for a cyclic polygon $P$,
$$\mu_P( \mathcal{A})= e(P)-1-2\omega_P-\left\{
       \begin{array}{ll}
    0    &\hbox{if }\   \sum_{i=1}^n \varepsilon_i \tan \alpha_i>0; \\
        1 & \hbox{otherwise}.\qed
       \end{array}
     \right.$$
\end{theorem}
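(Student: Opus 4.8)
The plan is to compute the Hessian of $\mathcal{A}$ directly in edge-direction coordinates and count its negative eigenvalues. First I would parametrize $\mathbf{L}$ near a cyclic polygon $P$ by the direction angles $\phi_1,\dots,\phi_n$ of the edges, so that the $i$-th edge vector is $l_i(\cos\phi_i,\sin\phi_i)$. The configuration space is cut out by the closure conditions $\sum_i l_i\cos\phi_i=\sum_i l_i\sin\phi_i=0$, with the simultaneous rotation $\phi_i\mapsto\phi_i+c$ quotiented out; the tangent space at $P$ is thus the variations $(\dot\phi_i)$ with $\sum_i l_i\sin\phi_i\,\dot\phi_i=\sum_i l_i\cos\phi_i\,\dot\phi_i=0$, taken modulo the constant vector. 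In these coordinates the oriented area is the edge-pairing expression $2\mathcal{A}=\sum_{i<j}l_il_j\sin(\phi_j-\phi_i)$, and a short Lagrange-multiplier computation recovers Theorem \ref{Thm_crirical_are_cyclic}: the constrained gradient vanishes exactly when the $v_i$ lie on a common circle. This pins down, at the critical point, the relations $l_i=2R\sin\alpha_i$ and the signs $\varepsilon_i$ recording on which side of each edge the center $O$ sits.

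The heart of the matter is the restricted Hessian at such a $P$. I would compute the free Hessian of $2\mathcal{A}$, whose off-diagonal entries are $l_kl_m\sin(\phi_k-\phi_m)$ with a matching sum on the diagonal, and then pass to the bordered form incorporating the two closure constraints. Rather than attack the full $(n-3)\times(n-3)$ restricted matrix, I would seek a basis adapted to the circle: writing $v_i=O+R(\cos\psi_i,\sin\psi_i)$ and using increments of the $\psi_i$ tends to turn the quadratic form into a sparse (arrowhead or tridiagonal-plus-low-rank) shape, with the diagonal controlled by the quantities $\varepsilon_i\tan\alpha_i$ (note each $\alpha_i\in(0,\pi/2)$, so $\tan\alpha_i>0$ and only the $\varepsilon_i$ carry sign) and the low-rank piece carrying the global winding data. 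The anchoring check is the convex cyclic polygon, which is the area maximizer for given edge lengths: there $e(P)=n$, $\omega_P=1$, and $\sum_i\varepsilon_i\tan\alpha_i>0$, so the formula yields $\mu_P=n-3$, a nondegenerate local maximum, which the computation must reproduce as negative definiteness.

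Given such a normal form, I would count negative eigenvalues by Sylvester's law of inertia, tracking the signs of the leading principal minors. The term $e(P)-1$ should emerge from the local diagonal contributions of the positively oriented edges, each reorientation (an $\varepsilon_i$ flipping sign) trading a positive pivot for a negative one; the $-2\omega_P$ correction should come from the low-rank global piece tied to how often the boundary wraps $O$; and the final correction (either $0$ or $1$) should be exactly the sign of the last pivot, governed by whether $\sum_i\varepsilon_i\tan\alpha_i$ is positive or negative. I expect the main obstacle to be isolating this single borderline eigenvalue cleanly: disentangling the winding contribution $-2\omega_P$ from the edge-orientation count, and showing the lone remaining pivot changes sign precisely across $\sum_i\varepsilon_i\tan\alpha_i=0$, is where the combinatorial and metric data interlock and where the bordered-Hessian bookkeeping is most delicate.

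As an alternative I expect to be technically smoother, I would exploit the local projective duality promised in the introduction: dualize $P$ to a tangential polygon with prescribed edge slopes, under which $\mathcal{A}$ corresponds, up to the linear perimeter relation, to the oriented perimeter $\mathcal{P}$ on the slope-configuration space $\mathbf{S}$. The Hessian of $\mathcal{P}$ at a tangential polygon has a more transparent structure, so its index is accessible by the same leading-principal-minor computation, after which one transfers the result back to $\mathcal{A}$, carefully tracking how the index and its dimension complement behave under duality and how $O$ and $\omega_P$ translate into the sign of the perimeter. The obstacle then migrates to making the local duality rigorous and to the index bookkeeping across it, while the eigenvalue count itself becomes the cleaner, dual computation.
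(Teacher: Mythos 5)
Your second, ``alternative'' route is the one the paper actually takes; your primary route (a direct bordered-Hessian computation in edge-direction coordinates) is the approach of the earlier references \cite{panzh}, \cite{zhu}, which the paper describes as quite involved and deliberately avoids. In the duality route you correctly identify the load-bearing pieces: the tangential dual polygon, the linear relation between area and perimeter under duality, the complementary index relation $\mu_P(\mathcal{A})=n-3-\mu_{P^o}(\mathcal{P})$, the leading-principal-minor computation of the perimeter Hessian at a tangential polygon (Theorem \ref{ThmMorseTangential}), and the dictionary (winding number preserved, left turns corresponding to positively oriented edges, the sign of $\mathcal{P}(P^*)$ equal to the sign of $\sum_i\varepsilon_i\tan\alpha_i$ by Lemma \ref{LemmaBifurc}).

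However, the step you defer --- ``making the local duality rigorous'' --- is precisely where the paper's new content lies, and it is not a routine fix. There is no projective duality acting on planar polygons that exchanges edge lengths with angles; the paper resolves this by lifting $P$ to a spherical cyclic polygon $P^R$ on a sphere of radius $R$, where point--great-circle duality genuinely swaps edge lengths and exterior angles and yields $\mathcal{A}(P)=Const-R\cdot\mathcal{P}(P^o)$. Two further arguments are then needed and are absent from your sketch: (i) Lemma \ref{LemmaBifurkFree}, asserting that the Morse index of $P^R$ for the spherical area equals that of the planar $P$, proved by a no-bifurcation argument as $R\to\infty$; and (ii) the deformation moving the circumscribed circle toward an equator of the sphere, so that the dual polygon $(P^R)^o$ becomes small and nearly planar and Theorem \ref{ThmMorseTangential} can legitimately be applied to a planar tangential polygon. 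Without these, the index bookkeeping across the duality has nothing to stand on. Your first route, if pursued instead, would stall exactly where you predict: extracting the $-2\omega_P$ term and the single borderline pivot from the bordered Hessian is the hard combinatorial core of \cite{panzh}, \cite{zhu}, not a detail to be filled in.
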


In the present paper we give an alternative proof of this theorem: we prove a slightly stronger claim, see Corollary \ref{RemFinal}.
\begin{remark}
 In a continuous one-parametric family of  cyclic polygons  with non-vanishing edge lengths, $\mu_P$ changes iff  $\sum_{i=1}^n \varepsilon_i \tan \alpha_i$  vanishes. Although $e(P)$ and $\omega_P$  can vary, the sum $e(P)-1-2\omega_P$ is constant.
\end{remark}
\bigskip

\subsection{Polygons with prescribed edge slopes}

Fix $n$ pairwise non-parallel straight lines  \(s_1,\ldots,s_n\) in \(\R^2\) passing through the origin and call them \textit{slope lines}.

Each \(n\)-tuple of  lines \(e_1,\ldots,e_n\subset \R^2\)
 with  $e_i$ parallel to $s_i$ yields a polygon $P$ whose consecutive vertices  are \(v_1=e_1\cap e_2, \ldots, v_n=e_n\cap e_1 \). We will denote this polygon by \(Q=Q\left(e_1,\ldots,e_n\right)\)  and say that $Q$ is a \textit{polygon with edge slopes} \(s_1,\ldots,s_n\) .

The space of all  polygons $Q$ (modulo translations)
 with edge slopes \(s_1,\ldots,s_n\),
  is denoted  by $\widetilde{\mathbf{S}}=\widetilde{\mathbf{S}}(s_1,...,s_n)$.

The subspace of $\widetilde{\mathbf{S}}(s_1,...,s_n)$ consisting of polygons with $|\mathcal{A}(Q)|=1$
is called the    \textit{configuration space
of polygons  with prescribed edge slopes}. We denote it by $\mathbf{S}=\mathbf{S}(s_1,...,s_n)$. It splits into a disjoint union \(\mathbf{S}_-\sqcup \mathbf{S}_+\) where the index indicates the sign of \(\mathcal{A}\).

\begin{figure}[h]
\centering \includegraphics[width=10 cm]{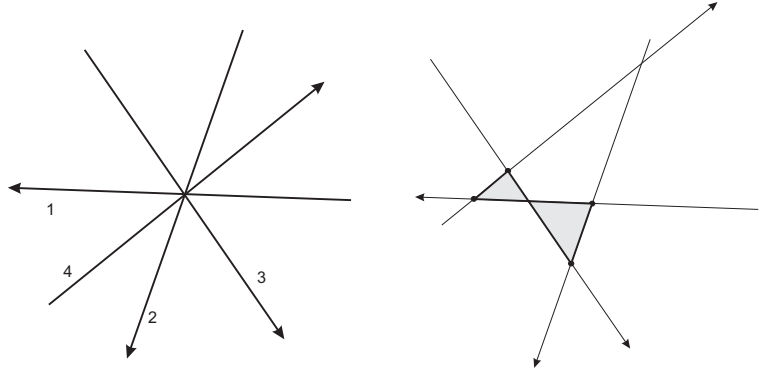}
\caption{Slope lines (left)  and  a polygon with these edge slopes (right).}\label{FigSlopes}
\end{figure}

Note that: (1) the condition $|\mathcal{A}(Q)|=1$ means that we  factor out dilations; (2) fixing slopes is the same as fixing angles and factoring out rotations.
\bigskip

Now fix a direction on each of the slope lines $\vec{s}_i$. Take $Q=Q(e_1,...,e_n)\in \mathbf{S}(s_1,...,s_n)$, and orient the lines $e_1,...,e_n$ consistently. Denote the oriented lines by $\vec{e}_1,\ldots,\vec{e}_n$.

\begin{definition}\label{DefPerim}
  The  perimeter of a polygon $Q\in \widetilde{\mathbf{S}}(s_1,...,s_n)$  is defined as follows:
\[
\mathcal{P}(Q)=\sum_{i=1}^n \sign_Q(i)|v_iv_{i+1}|,
\]

where \(\sign_Q(i)=\left\{
                                                   \begin{array}{ll}
                                                     1, & \hbox{if $\vec{e}_i$ is codirected with $\overrightarrow{v_iv}_{i+1}$;} \\
                                                     -1, & \hbox{otherwise.}
                                                   \end{array}
                                                 \right.\)

\end{definition}

Thus defined, perimeter may be negative or vanish.

\section{Topology of the configuration space  of polygons with prescribed slopes}

Let the \textit{angle \(\angle(r,s)\) between two lines} $r$ and $s$ be the minimal positive angle such that the counterclockwise rotation by \(\angle(r,s)\) takes \(r\) to \(s\).

Assuming that an $n$-tuple of slope lines  \(s_1,\ldots,s_n\) in \(\R^2\) is fixed, set
\newline \(t(s_1,\ldots,s_n)\defeq\sum_{i=1}^{n-1}\angle(s_i,s_{i+1})+\angle(s_n,s_1)\).

\medskip

The example for \(n=3\) will be useful in the sequel:
\begin{example}\label{triangle}
\(t(s_1,s_2,s_3)\) is either \(\pi\) or \(2\pi\). In the first (respectively, second) case the area of each nondegenerate triangle in \(\widetilde{\mathbf{S}}\) is negative (respectively, positive).

\end{example}

\begin{lemma}
\label{turn}
\(\,\)
\begin{enumerate}

    \item \(t(s_1,\ldots,s_n)\) takes values in $\{\pi,2\pi, \dots, (n-1)\pi\}$.
    \item \(t(s_1,\ldots,s_n)=t(s_1,\ldots,s_{n-1})+t(s_1,s_{n-1},s_n)-\pi\).\qed

\end{enumerate}

\end{lemma}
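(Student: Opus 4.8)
The plan is to prove both parts of Lemma \ref{turn} by understanding $t(s_1,\ldots,s_n)$ as a total turning of the edge directions as we traverse the polygon once. Since the slope lines are pairwise non-parallel, each term $\angle(s_i,s_{i+1})$ lies strictly in $(0,\pi)$, and likewise $\angle(s_n,s_1)\in(0,\pi)$. The sum of these $n$ angles is the total counterclockwise rotation accumulated as the edge direction passes from $s_1$ through $s_2,\ldots,s_n$ and back to $s_1$. Because we return to the starting line $s_1$, and each line is determined modulo $\pi$, this total rotation must be an integer multiple of $\pi$. This immediately gives that $t(s_1,\ldots,s_n)\in\pi\mathbb{Z}$.

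For part (1), the plan is to combine the integrality just observed with strict bounds. Each of the $n$ summands lies in the open interval $(0,\pi)$, so $0 < t(s_1,\ldots,s_n) < n\pi$. The only multiples of $\pi$ strictly between $0$ and $n\pi$ are $\pi,2\pi,\ldots,(n-1)\pi$, which is exactly the claimed range. I would spell out that no finer information is needed here: the lower bound comes from positivity of each angle, the upper bound from each angle being less than $\pi$, and integrality from the closing-up condition.

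For part (2), the idea is a telescoping/splitting argument that compares the $n$-gon turning with that of the $(n-1)$-gon obtained by deleting $s_n$ and closing up via the chord from $s_{n-1}$ back to $s_1$. Explicitly, $t(s_1,\ldots,s_{n-1})$ contains the closing term $\angle(s_{n-1},s_1)$, whereas $t(s_1,\ldots,s_n)$ replaces it with the two-step detour $\angle(s_{n-1},s_n)+\angle(s_n,s_1)$. So the difference $t(s_1,\ldots,s_n)-t(s_1,\ldots,s_{n-1})$ equals $\angle(s_{n-1},s_n)+\angle(s_n,s_1)-\angle(s_{n-1},s_1)$. On the other hand, $t(s_1,s_{n-1},s_n)=\angle(s_1,s_{n-1})+\angle(s_{n-1},s_n)+\angle(s_n,s_1)$, and one uses the elementary identity $\angle(s_1,s_{n-1})+\angle(s_{n-1},s_1)=\pi$ (valid since the two lines are non-parallel) to rewrite the bracketed expression as $t(s_1,s_{n-1},s_n)-\pi$. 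Substituting yields the claimed recursion.

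**The main obstacle** I anticipate is bookkeeping the mod-$\pi$ ambiguity correctly: the function $\angle(\cdot,\cdot)$ is not additive in general, since $\angle(r,s)+\angle(s,u)$ may exceed $\pi$ and thus differ from $\angle(r,u)$ by exactly $\pi$. The clean way around this is to work with a lifted angle coordinate $\theta_i\in\mathbb{R}/\pi\mathbb{Z}$ for each line and track an honest real-valued cumulative turn along the traversal, reducing mod $\pi$ only at the end; then $\angle(r,s)$ is the canonical representative in $(0,\pi)$ of the difference of coordinates, and the identity $\angle(r,s)+\angle(s,r)=\pi$ and the telescoping in part (2) both become transparent. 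Care is needed only to verify that the non-parallel hypothesis rules out the degenerate values $0$ and $\pi$ for every individual angle, so that all strict inequalities in part (1) are genuinely strict.
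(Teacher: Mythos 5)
Your proof is correct. The paper states Lemma \ref{turn} without proof (the \qed follows the statement directly), and your argument --- strict bounds $0<t<n\pi$ from each angle lying in $(0,\pi)$, integrality mod $\pi$ from the closing-up of the cycle, and the telescoping identity $\angle(s_{n-1},s_n)+\angle(s_n,s_1)-\angle(s_{n-1},s_1)=t(s_1,s_{n-1},s_n)-\pi$ via $\angle(r,s)+\angle(s,r)=\pi$ --- is exactly the natural verification the authors leave to the reader.
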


The informal meaning of the following lemma is: the area and perimeter behave additively with respect to homological sum of polygons.
\begin{lemma}
\label{sum}
\(\,\)
For a polygon  \(Q=Q(e_1,\ldots,e_n)\in\widetilde{\mathbf{ S}}(s_1,...,s_n)\), we have
\begin{enumerate}
    \item \(\mathcal{A}(Q)= \mathcal{A}(Q_{1,\ldots,n})=\mathcal{A}(Q_{1,\ldots,n-1})+\mathcal{A}(Q_{1,n-1,n})\);
		\item  \(\mathcal{P}(Q)= \mathcal{P}(Q_{1,\ldots,n})=\mathcal{P}(Q_{1,\ldots,n-1})+\mathcal{P}(Q_{1,n-1,n})\),

\end{enumerate}
where $Q_{i_1,...,i_k}=Q(e_{i_1},...e_{i_k})\in \widetilde{\mathbf{S}}(s_{i_1},...,s_{i_k})$.\qed
\end{lemma}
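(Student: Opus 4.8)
The plan is to derive both identities at once from a single principle: \emph{signed quantities are additive along each fixed slope line}, once we introduce the auxiliary intersection point $w\defeq e_1\cap e_{n-1}$. First I would record the two collinearities that make everything work. Writing $v_i=e_i\cap e_{i+1}$ (indices mod $n$), the points $v_{n-2}=e_{n-2}\cap e_{n-1}$, $w$, $v_{n-1}=e_{n-1}\cap e_n$ all lie on the line $e_{n-1}$, while $v_n=e_n\cap e_1$, $w$, $v_1=e_1\cap e_2$ all lie on the line $e_1$. In traversal order, $Q=Q(e_1,\ldots,e_n)$ runs $v_1\to\cdots\to v_n\to v_1$, the $(n-1)$-gon $Q_{1,\ldots,n-1}$ runs $v_1\to\cdots\to v_{n-2}\to w\to v_1$, and the triangle $Q_{1,n-1,n}$ runs $w\to v_{n-1}\to v_n\to w$.

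Next I would match, line by line, the directed edges contributed by the three polygons. On each line $e_j$ with $2\le j\le n-2$ the polygons $Q$ and $Q_{1,\ldots,n-1}$ carry the \emph{same} directed edge $[v_{j-1},v_j]$ and the triangle carries nothing, while on $e_n$ both $Q$ and $Q_{1,n-1,n}$ carry $[v_{n-1},v_n]$. The only interesting lines are $e_{n-1}$ and $e_1$: on $e_{n-1}$ the polygon $Q$ carries $[v_{n-2},v_{n-1}]$, whereas $Q_{1,\ldots,n-1}$ and $Q_{1,n-1,n}$ carry $[v_{n-2},w]$ and $[w,v_{n-1}]$; on $e_1$ the polygon $Q$ carries $[v_n,v_1]$, whereas $Q_{1,\ldots,n-1}$ and $Q_{1,n-1,n}$ carry $[w,v_1]$ and $[v_n,w]$. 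The role of the two collinearities is exactly that on each of these lines the oriented concatenation $[a,w]+[w,b]=[a,b]$ holds, for \emph{any} position of $w$.

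For the perimeter (item (2)) I would make this quantitative. Fix the direction $\vec{e}_j$ on each slope line and define the \emph{signed length} of a directed segment on $e_j$ to be its $\vec{e}_j$-coordinate displacement; by Definition \ref{DefPerim}, the perimeter of any of our polygons is the sum over its edges of these signed lengths. Signed length along a fixed oriented line is additive, so on $e_{n-1}$ the contributions of $[v_{n-2},w]$ and $[w,v_{n-1}]$ sum to the signed length of $[v_{n-2},v_{n-1}]$, and on $e_1$ the contributions of $[v_n,w]$ and $[w,v_1]$ sum to that of $[v_n,v_1]$; all remaining lines agree edge by edge. Summing over $e_1,\ldots,e_n$ gives $\mathcal{P}(Q)=\mathcal{P}(Q_{1,\ldots,n-1})+\mathcal{P}(Q_{1,n-1,n})$.

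For the area (item (1)) the same edge bookkeeping shows that, as integral $1$-chains of oriented edges,
\[
[Q]=[Q_{1,\ldots,n-1}]+[Q_{1,n-1,n}],
\]
the two merges on $e_{n-1}$ and $e_1$ being precisely the concatenations $[a,w]+[w,b]=[a,b]$ above. Since each of the three chains is a closed cycle and the winding number of a cycle around a fixed point $x$ is the integral of the angular form, winding numbers add pointwise: $w(Q,x)=w(Q_{1,\ldots,n-1},x)+w(Q_{1,n-1,n},x)$ for every $x$ off the lines. Integrating against $dx$ as in Definition \ref{Dfn_area} yields the area identity. The one place where a naive picture could mislead — and hence the only real subtlety — is that $w$ need not lie between the relevant vertices; the argument is robust precisely because the signed-length identity and the oriented concatenation $[a,w]+[w,b]=[a,b]$ hold irrespective of the order of the three collinear points. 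Degenerate configurations (coincident vertices, or near-parallel lines sending $w$ to infinity) are excluded generically and are handled by continuity.
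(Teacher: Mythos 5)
Your proof is correct. The paper states Lemma \ref{sum} without proof (the statement carries a \qed), so there is nothing to compare against; your argument --- introducing $w=e_1\cap e_{n-1}$, matching the directed edges of the three polygons line by line, using additivity of signed displacement along each oriented line $\vec{e}_i$ for the perimeter, and additivity of winding numbers of the $1$-cycles $[Q]=[Q_{1,\ldots,n-1}]+[Q_{1,n-1,n}]$ for the area --- is exactly the justification the authors leave implicit, and every step checks out (the key collinearities $v_{n-2},w,v_{n-1}\in e_{n-1}$ and $v_n,w,v_1\in e_1$ make the concatenation identity $[a,w]+[w,b]=[a,b]$ valid regardless of where $w$ sits on the line). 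One small remark: the slope lines are assumed pairwise non-parallel throughout the paper, so $w$ is always a well-defined finite point and your closing caveat about near-parallel degenerations is unnecessary; likewise the identities are exact algebraic ones (for the area one can equally well note that the shoelace terms satisfy $\det(a,w)+\det(w,b)=\det(a,b)$ for collinear $a,w,b$), so no genericity or continuity argument is needed.
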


\begin{figure}[h]
\centering \includegraphics[width=8 cm]{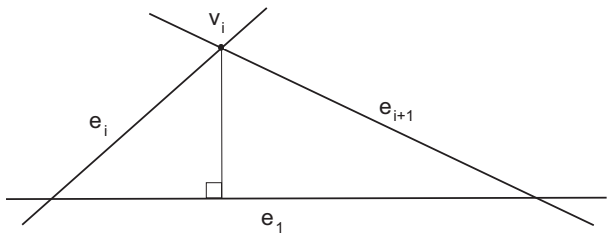}
\caption{A summand of $\mathcal{A}(Q_{1,\ldots,n})$.}\label{FigCoordinates}
\end{figure}

\begin{theorem}\label{ThmConfSpace}  The configuration space $\mathbf{S}(s_1,\ldots,s_n)$ is homeomorphic to disjoint union of products of a sphere and a disc:
$$
 S^{n-k-2}\times \D^{k-1}\sqcup S^{k-2} \times \D^{n-k-1}
,$$ where $t(s_1,\ldots,s_n)=k\pi$.

The left-hand  part corresponds to \(\mathbf{S}_-\) and the right-hand part corresponds to \(\mathbf{S}_+\).
\end{theorem}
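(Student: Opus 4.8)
The plan is to linearize the configuration space and reduce the statement to a signature computation for the oriented area, viewed as a quadratic form. First I would introduce signed edge-length coordinates: fixing unit vectors $\hat s_i$ along the slope lines and killing translations by placing $v_n$ at the origin, every polygon $Q=Q(e_1,\dots,e_n)$ is encoded by $t=(t_1,\dots,t_n)\in\R^n$, where $t_i$ is the signed length of the edge along $s_i$, subject to the single vector closing relation $\sum_{i=1}^n t_i\hat s_i=0$. Since the $\hat s_i$ span $\R^2$, this identifies $\widetilde{\mathbf S}(s_1,\dots,s_n)$ with a linear subspace $W\subset\R^n$ of dimension $n-2$. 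A shoelace computation using $v_i=\sum_{a\le i}t_a\hat s_a$ then gives $2\mathcal A=\sum_{a<i}(\hat s_a\times\hat s_i)\,t_a t_i$, so that $\mathcal A$ restricts to $W$ as a quadratic form $\mathcal Q$, and the configuration space is $\mathbf S=\{\mathcal Q=1\}\sqcup\{\mathcal Q=-1\}=\mathbf S_+\sqcup\mathbf S_-$.

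Second, I would record the standard topology of a quadratic-form level set: if $\mathcal Q$ is nondegenerate on $\R^{n-2}$ of signature $(p,q)$, then writing $\mathcal Q=|x|^2-|y|^2$ with $x\in\R^p,\ y\in\R^q$, the parametrization $(u,y)\mapsto(\sqrt{1+|y|^2}\,u,\,y)$ with $u\in S^{p-1}$ gives a homeomorphism $\{\mathcal Q=1\}\cong S^{p-1}\times\R^q\cong S^{p-1}\times\D^q$ onto the open disc factor, and symmetrically $\{\mathcal Q=-1\}\cong S^{q-1}\times\D^p$; the convention $S^{-1}=\varnothing$ covers the definite cases. Matching this with the asserted answer shows the entire theorem is equivalent to the single claim that the signature of $\mathcal Q$ equals $(p,q)=(k-1,\,n-k-1)$, where $t(s_1,\dots,s_n)=k\pi$; in particular $\mathcal Q$ is then of full rank $n-2$, hence nondegenerate, so $\pm1$ are regular values and $\mathbf S$ is a smooth manifold.

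Third, and this is the heart of the argument, I would compute the signature by induction on $n$ using the two lemmas already available. The base case $n=3$ is one-dimensional: $\mathcal Q(\lambda)=c\lambda^2$ with $\sign(c)$ equal to the sign of the triangle area, which by Example \ref{triangle} is positive exactly when $t=2\pi$; hence the signature is $(k-1,2-k)$ for $k\in\{1,2\}$, as required. For the inductive step I would exhibit the linear splitting dual to the homological decomposition of Lemma \ref{sum}: the relations $t_i=t_i\ (2\le i\le n-2)$, $t_1=t_1'+\tau_1$, $t_{n-1}=t_{n-1}'+\tau_{n-1}$, $t_n=\tau_n$ identify $W\cong\widetilde{\mathbf S}(s_1,\dots,s_{n-1})\oplus\widetilde{\mathbf S}(s_1,s_{n-1},s_n)$, and one checks that the original closing relation is precisely the sum of the two sub-relations, so this is a genuine linear isomorphism of the correct dimensions $(n-3)+1=n-2$. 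Writing $V_1,V_2$ for the two summands, Lemma \ref{sum}(1) says exactly that $\mathcal Q(v_1+v_2)=\mathcal Q(v_1)+\mathcal Q(v_2)$ for $v_1\in V_1,\ v_2\in V_2$ (apply it in general, once on $V_1$ with degenerate triangle, and once on $V_2$ with degenerate $(n-1)$-gon); expanding via the associated symmetric bilinear form $B$ forces the cross term $B(v_1,v_2)$ to vanish, so the decomposition is $B$-orthogonal. Signatures of an orthogonal direct sum add, and Lemma \ref{turn}(2) writes $k=k'+k''-1$, so the inductive hypothesis $(k'-1,\,(n-1)-k'-1)$ together with the triangle signature $(k''-1,\,2-k'')$ yields $(k-1,\,n-k-1)$, closing the induction.

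Finally I would read off the components: signature $(p,q)=(k-1,n-k-1)$ gives $\mathbf S_+=\{\mathcal Q=1\}\cong S^{k-2}\times\D^{n-k-1}$ and $\mathbf S_-=\{\mathcal Q=-1\}\cong S^{n-k-2}\times\D^{k-1}$, which is exactly the claimed identification of the right-hand and left-hand parts. The step I expect to require the most care is the orthogonality of the splitting: one must verify that the coordinate identification is well defined modulo translations and really carries $\mathcal Q|_{V_1}$ and $\mathcal Q|_{V_2}$ onto the area forms of the $(n-1)$-gon and triangle spaces, so that Lemma \ref{sum} may legitimately be applied to the degenerate configurations lying in each summand; only then does additivity of $\mathcal A$ upgrade to $B$-orthogonality of the decomposition.
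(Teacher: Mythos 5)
Your argument is correct and follows essentially the same route as the paper: both reduce the theorem to showing that the oriented area, viewed as a quadratic form on the $(n-2)$-dimensional linear space $\widetilde{\mathbf S}$, has signature $(k-1,\,n-k-1)$, by decomposing a polygon into triangles via Lemma~\ref{sum} and counting the positively-oriented ones via Lemma~\ref{turn} and Example~\ref{triangle}, after which the identification of the level sets $\{\mathcal A=\pm1\}$ with $S^{p-1}\times\D^{q}$ is the same standard step. The only difference is presentational: the paper exhibits diagonalizing coordinates $x_i=\sqrt{c_{i+1}}\dist(v_{i+1},e_1)$ for the full triangle decomposition at once, whereas you peel off one triangle per inductive step and derive $B$-orthogonality of the splitting from additivity of the area --- the same computation packaged as an induction, with your orthogonality argument supplying the justification the paper leaves implicit when it asserts that the $x_i$ form a coordinate system.
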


\begin{proof}
We shall prove that \(\mathbf{S}_+\) is homeomorphic to \(S^{k-2} \times \D^{n-k-1}\). The proof  for \(\mathbf{S}_-\) is analogous.

By Lemma \ref{sum},
\[
\mathcal{A}(Q_{1,\ldots,n})=\sum_{i=2}^{n-1}\mathcal{A}(Q_{1,i,i+1})=\sum_{i=2}^{n-1}\sign\Big(t(s_1,s_i,s_{i+1})-\frac{3\pi}2\Big)\cdot c_i\cdot \dist(v_i,e_1)^2,
\]
where \(c_i=c_i(s_1,s_i,s_{i+1})\) is some positive constant depending only on the edge slopes (see Example \ref{triangle} and Fig. \ref{FigCoordinates}).  Thus we can parameterize $\widetilde{\mathbf{S}}(s_1,...,s_n)$ by \(\{x_i=\sqrt{c_{i+1}}\dist(v_{i+1},e_1)\}_{i=1}^{n-2}\). That is,
\[\mathbf{S}_+=\{x\in\R^{n-2} \mid \sum_{j\in A}x_j^2 - \sum_{j\in B} x_j^2 =1\},\] where
\(|A|=k-1\), \(A\sqcup B=\{1,\ldots,n-2\}\).
This is homeomorphic to \[\{x\in\R^{n-2} \mid \sum_{j\in A}x_j^2 =1, \sum_{j\in B}x_j^2 <1\}.\]
Indeed, for \(k>1\) the homeomorphism \(h: x\mapsto \frac{x}{\sqrt{\sum_{j\in A}x_j^2}}\) is well defined and appropriate, whereas for \(k=1\) both sets are empty.
The claim follows.
\end{proof}

\section{Critical points of the perimeter }
Assume that an $n$-tuple of directed slope lines  \(\vec{s}_1,\ldots,\vec{s}_n\) in \(\R^2\) is fixed.

\begin{definition}\label{DefTang} \begin{enumerate}
                                 \item A polygon \(Q(e_1,\ldots,e_n)\in \widetilde{\mathbf{S}}(s_1,...,s_n)\)  is \textit{tangential} if there exists a circle  $\sigma$ such that

                                      (a)  each of $e_i$  is tangent to $\sigma$, and

(b) either $\sigma$ lies on the left with respect to all of $\vec{e}_i$, or $\sigma$ lies on the right with respect to all of $\vec{e_i }$, see Fig. \ref{FigSuperscribed}.

In this case we say that the circle $\sigma$ is \textit{inscribed} in $Q$  and write $\sigma=\sigma(Q)$.
                                 \item By the \textit{radius} $r=r(\sigma(Q))$  of the inscribed circle $\sigma$ we mean the usual radius taken with the  sign  ''$+$'' if $\sigma$ lies on the left of $\vec{e}_i$, and with the sign ''$-$'' otherwise.
                               \end{enumerate}

\end{definition}

\begin{figure}[h]
\centering \includegraphics[width=14 cm]{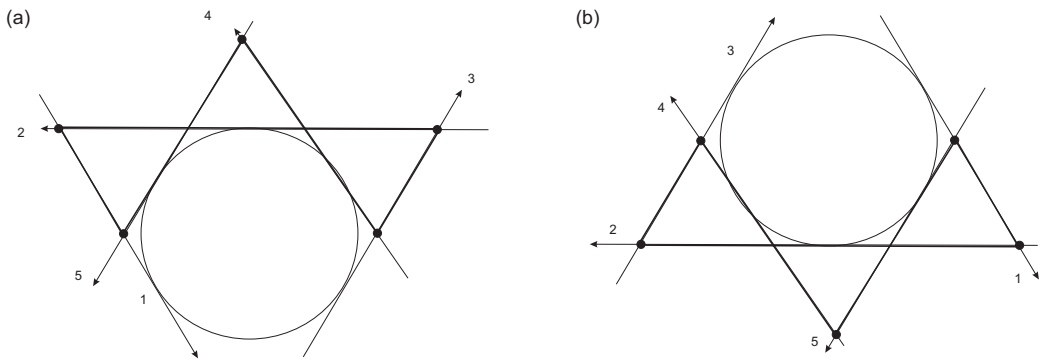}
\caption{(a) A tangential polygon $Q$ with the inscribed circle of positive radius. Here $\mathcal{A}(Q)<0, \mathcal{P}(Q)<0$.
(b) The tangential polygon $-Q$ with $\mathcal{A}(-Q)=\mathcal{A}(Q)<0, \mathcal{P}(-Q)=-\mathcal{P}(Q)>0$. The inscribed circle has negative radius. }\label{FigSuperscribed}
\end{figure}

\begin{lemma}\label{APr}
  The area  and perimeter of a tangential polytope $Q$ satisfy:
  $$\mathcal{A}(Q)=\frac12\mathcal{P}(Q)\cdot r(\sigma(Q)).$$

\end{lemma}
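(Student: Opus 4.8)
The plan is to place the center $O$ of the inscribed circle $\sigma(Q)$ at a common apex and cut the polygon into the $n$ triangles $Ov_iv_{i+1}$. The starting point is the center-independence of the oriented area: for \emph{any} fixed point $O$ one has
$$2\mathcal{A}(Q)=\sum_{i=1}^n (v_i-O)\times(v_{i+1}-O),$$
where $\times$ is the scalar cross product in the plane, the $O$-dependent terms cancelling because $\sum_i v_i=\sum_i v_{i+1}$. Hence $\mathcal{A}(Q)=\sum_{i=1}^n T_i$, where $T_i=\tfrac12(v_i-O)\times(v_{i+1}-O)$ is the oriented area of the triangle $Ov_iv_{i+1}$. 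The next step is to evaluate each $T_i$. Since $e_i$ is tangent to $\sigma$, the distance from $O$ to the line $e_i$ equals $|r|$; this line carries the segment $v_iv_{i+1}$, so the unsigned area of the triangle is $\tfrac12|v_iv_{i+1}|\cdot|r|$, and $T_i=\pm\tfrac12|v_iv_{i+1}|\cdot|r|$, with the plus sign exactly when $O$ lies to the left of $\overrightarrow{v_iv_{i+1}}$.

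The heart of the argument, and the only place requiring care, is to reconcile three sign conventions: the combinatorial $\sign_Q(i)$ (codirection of $\overrightarrow{v_iv_{i+1}}$ with the fixed $\vec e_i$), the signed radius $r$ (the side of $\vec e_i$ on which $\sigma$ sits, uniform in $i$ by condition (b) of Definition \ref{DefTang}), and the orientation of $T_i$ (the side of $\overrightarrow{v_iv_{i+1}}$ on which $O$ sits). I would observe that $O$ lies on the same side of $\overrightarrow{v_iv_{i+1}}$ as of $\vec e_i$ precisely when $\sign_Q(i)=+1$; since $O$ lies to the left of every $\vec e_i$ exactly when $r>0$, the sign of $T_i$ equals $\sign_Q(i)\cdot\sign(r)$. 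A short case check (the two values of $\sign_Q(i)$ against the two signs of $r$, with Fig. \ref{FigSuperscribed} as a sanity check) then collapses all four combinations into the single identity
$$T_i=\tfrac12\,\sign_Q(i)\,|v_iv_{i+1}|\,r.$$

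Summing over $i$ finishes the proof:
$$\mathcal{A}(Q)=\sum_{i=1}^n T_i=\tfrac12\,r\sum_{i=1}^n \sign_Q(i)\,|v_iv_{i+1}|=\tfrac12\,\mathcal{P}(Q)\,r(\sigma(Q)).$$
I expect the decomposition and the area-of-triangle computation to be entirely routine; the genuine obstacle is purely bookkeeping, namely verifying that the triangle orientations, read off from the uniform side condition in Definition \ref{DefTang}, line up with the edge signs $\sign_Q(i)$ so that the single factor $r$ (rather than $|r|$) can be pulled out of the sum.
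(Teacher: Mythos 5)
Your proof is correct: the telescoping identity for the oriented area about an arbitrary apex $O$, the evaluation of each triangle's unsigned area via the tangency distance $|r|$, and the four-case reconciliation of $\sign_Q(i)$ with the signed radius $r$ all check out and combine into the single identity $T_i=\tfrac12\sign_Q(i)\,|v_iv_{i+1}|\,r$. The paper states Lemma~\ref{APr} without any proof, treating it as routine, and your decomposition into triangles over the incenter is exactly the standard argument the authors are implicitly invoking, so there is nothing to compare against and nothing missing.
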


\begin{theorem}\label{ThmCritPer}
 \(Q\in\mathbf{S}\) is a critical point of \(\mathcal{P}\) iff $Q$ is tangential.
\end{theorem}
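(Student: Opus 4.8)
The plan is to linearize the entire picture by passing to \emph{support parameters}. Fix the normal $\vec n_i \defeq R_{\pi/2}\vec s_i$ to each directed slope line and describe a line $e_i$ parallel to $s_i$ by its signed offset $h_i$, so that $e_i = \{p : \langle p,\vec n_i\rangle = h_i\}$. Each vertex $v_i = e_i\cap e_{i+1}$ solves a linear $2\times 2$ system, so $v_i$ is an affine-linear function of $h = (h_1,\ldots,h_n)$; consequently each signed edge length $\ell_i \defeq \sign_Q(i)\,|v_iv_{i+1}|$ is linear in $h$, and so is the perimeter $\mathcal P = \sum_i \ell_i$. Translations act on the $h$-coordinates as the $2$-dimensional subspace $T = \{(\langle\tau,\vec n_i\rangle)_i : \tau\in\R^2\}$, and $\widetilde{\mathbf S}$ is naturally identified with $W \defeq \R^n/T$. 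On $W$ the oriented area $\mathcal A$ is a quadratic form; let $\bar B$ be its polarization. By the diagonalization in the proof of Theorem \ref{ThmConfSpace} (the coordinates $x_j$ there are linear in $h$ and bring $\mathcal A$ to $\sum_{A}x_j^2 - \sum_B x_j^2$), the form $\bar B$ is nondegenerate.

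First I would pin down tangentiality in these coordinates. A circle of signed radius $r$ centred at $c$ is inscribed in $Q$ in the sense of Definition \ref{DefTang} precisely when $\langle c,\vec n_i\rangle - h_i$ equals one and the same value $-r$ for every $i$: equality of the absolute values is the tangency condition (a), and the common sign is the same-side condition (b). Equivalently $h_i = \langle c,\vec n_i\rangle + r$ for all $i$, i.e. modulo $T$,
\[
[h] = r\,[\mathbf 1], \qquad \mathbf 1 = (1,\ldots,1).
\]
Thus \emph{the tangential polygons are exactly the scalar multiples of $[\mathbf 1]$ in $W$}, and $[\mathbf 1]\neq 0$ because no $\tau$ can satisfy $\langle\tau,\vec n_i\rangle\equiv 1$ for three or more distinct normals.

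The heart of the argument is the identity $\mathcal P(Q) = 2\,\bar B([h],[\mathbf 1])$. I would obtain it from the first variation of the oriented area: translating $e_i$ outward by $dh_i$ sweeps a strip of signed area $\ell_i\,dh_i$, whence $\partial\mathcal A/\partial h_i = \ell_i$ (with the conventions above this sign is uniform in $i$). Since $\ell_i$ is linear in $h$, the Hessian of $\mathcal A$ equals the matrix $(\partial\ell_i/\partial h_j)$, which is symmetric as any Hessian and represents $2\bar B$; therefore
\[
\frac{\partial\mathcal P}{\partial h_i} = \sum_k \frac{\partial\ell_k}{\partial h_i} = \sum_k 2\bar B_{ki} = 2(\bar B\,\mathbf 1)_i,
\]
which is the displayed identity — the polygonal Cauchy--Minkowski relation ``perimeter $=$ twice the mixed area with the unit tangential polygon $[\mathbf 1]$.'' Note that Lemma \ref{APr} yields only the diagonal value $\bar B([\mathbf 1],[\mathbf 1]) = \tfrac12\mathcal P([\mathbf 1])$, so this mixed-area computation is genuinely needed. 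The main obstacle I expect is exactly the bookkeeping behind $\partial\mathcal A/\partial h_i = \ell_i$: one must choose the orientation of the polygon and the normals $\vec n_i$ so that the strip-area sign agrees edge-by-edge with the sign $\sign_Q(i)$ defining $\mathcal P$, since any nonuniform sign would destroy the clean formula $\nabla\mathcal P = 2\bar B\mathbf 1$.

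Finally I would close with a Lagrange multiplier computation on $W$. A point $Q\in\mathbf S$, say with $\mathcal A = 1$, is critical for $\mathcal P$ iff $\nabla\mathcal P = \lambda\nabla\mathcal A$ for some $\lambda$, i.e. $2\bar B([\mathbf 1],\cdot) = 2\lambda\,\bar B([h],\cdot)$, i.e. $\bar B([\mathbf 1] - \lambda[h],\cdot) = 0$. Nondegeneracy of $\bar B$ forces $[\mathbf 1] = \lambda[h]$; since $[h]\neq 0$ on $\mathbf S$ we get $\lambda\neq 0$, so this says $[h]\in\R\cdot[\mathbf 1]$, which by the second step is exactly tangentiality. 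Conversely, a tangential $[h] = r[\mathbf 1]$ with $r\neq 0$ satisfies the multiplier equation with $\lambda = 1/r$. The argument on $\mathbf S_-$ is identical.
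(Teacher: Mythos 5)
Your argument is correct, and it is genuinely different from the paper's. The paper triangulates $Q$ from the line $e_1$, uses the inradii $r_1,\dots,r_{n-2}$ of the triangles $Q_{1,i+1,i+2}$ as (nonlinear) coordinates, writes $\mathcal P=\sum p_ir_i$ and $\mathcal A=\frac12\sum p_ir_i^2$ via Lemmas \ref{sum} and \ref{APr}, and then differentiates the constraint implicitly to show that $\nabla\mathcal P=0$ exactly when all $r_i$ coincide (whence the inscribed circles of the triangles merge into one circle inscribed in $Q$). You instead linearize everything with support numbers, identify $\widetilde{\mathbf S}$ with $W=\R^n/T$, observe that tangential polygons form the line $\R\cdot[\mathbf 1]$, and reduce the statement to the Lagrange condition $\bar B([\mathbf 1]-\lambda[h],\cdot)=0$ plus nondegeneracy of the area form (which does follow from the diagonalization in Theorem \ref{ThmConfSpace}, since the coordinates $x_j$ there are linear on $W$ and all $n-2$ of them appear with nonzero coefficient). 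The one step you rightly flag, $\partial\mathcal A/\partial h_i=\ell_i$, is routine: writing $2\mathcal A=\sum_i\det(v_i,v_{i+1})=\sum_i\ell_i\det(v_i,\vec s_i)=\pm\sum_i h_i\ell_i$ gives a sign that is uniform in $i$ (and irrelevant to the multiplier equation), and symmetry of $(\partial\ell_i/\partial h_j)$ then follows since $\mathcal A$ is a quadratic form in $h$. Your route buys more than the paper's at this stage: it exhibits the critical points as $[h]=\pm[\mathbf 1]/\sqrt{|\bar B(\mathbf 1,\mathbf 1)|}$, so Corollary \ref{CorCritPer} (zero or exactly two critical points, depending on whether $\bar B(\mathbf 1,\mathbf 1)=\frac12\mathcal P([\mathbf 1])\cdot 1$ vanishes) is immediate, and the Morse index becomes the signature of $\bar B$ restricted to the $\bar B$-orthogonal complement of $[\mathbf 1]$. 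The paper's nonlinear coordinates, on the other hand, are chosen so that the explicit Hessian computation of Section 5 goes through entry by entry.
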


\begin{proof}

Let \(r_i\) be the radius of the (uniquely defined) circle inscribed\footnote{in the sense of Definition \ref{DefTang}.}  in the triangle \(Q_{1,i+1,i+2}\), \ \(i=1,\ldots,n-2\).  Denote by \(p_i\) the perimeter of the triangle homothetic to $Q_{1,i+1,i+2}$ (that is, with the same edge slopes) whose inscribed radius equals $1$. If $r_i\neq 0$, \(p_i\) is the perimeter of is the triangle
 \(\frac{1}{r_i}Q_{1,i+1,i+2}\).
Then by Lemma~\ref{sum} and Lemma~\ref{APr} we have
$$
                                  \mathcal{P}(Q)=\sum_{i=1}^{n-2}p_i\cdot  r_i,  \ \ \ \ \hbox{and}$$
                                 $$ \pm1=\mathcal{A}(Q)=\frac{1}{2}\sum_{i=1}^{n-2}p_i \cdot r_i^2.$$

The collection of radii gives a coordinate system on $\widetilde{\mathbf{S}}$.
W.l.o.g. we assume \(r_1>0\). Locally on $\mathbf{S}$, the second row implicitly defines a function \(r_1(r_2,\ldots,r_{n-2})\). Let us take \(j\)-th partial derivatives of the both rows, \(j=2,\ldots,n-2\).
 $$             \hbox{Since } \         0=\diffp{\mathcal{A}}{{r_j}}(Q)=p_1 r_1\diffp{r_1}{{r_j}}+p_j r_j,
$$

$$ \hbox{we have } \
                                  \diffp{r_1}{{r_j}}=-\frac{p_j r_j}{p_1 r_1} .$$
$$ \hbox{Therefore, } \
                                  \diffp{\mathcal{P}}{{r_j}}(Q)=p_1 \diffp{r_1}{{r_j}}+p_j=p_j(1-\frac{r_j}{r_1}). $$

Thus the gradient of the perimeter is zero iff \(r_1=r_2=\ldots=r_{n-2}=r\)
\end{proof}

There exists exactly one (up to a dilation) pair of mutually symmetric tangential polygons  $Q$ and $-Q$. For them we have: $\mathcal{A}(-Q)=\mathcal{A}(Q),\  \mathcal{P}(-Q)=-\mathcal{P}(Q)$, and $r(-Q)=-r(Q)$, see Fig.~\ref{FigSuperscribed} for example. If the area is non-zero, scaling gives $|\mathcal{A}|=1$.

\begin{corollary}\label{CorCritPer}
If the area of a tangential polygon is zero, there are no critical points of \(\mathcal{P}\) on the configuration space $\mathbf{S}$.
Otherwise there are exactly two critical points on the configuration space $\mathbf{S}$. Either they both lie in $\mathbf{S}_+$, or they both lie in $\mathbf{S}_-$.\qed

\end{corollary}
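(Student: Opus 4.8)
The plan is to reduce the statement to a short counting argument by combining Theorem \ref{ThmCritPer} with the uniqueness remark that immediately precedes the corollary. By Theorem \ref{ThmCritPer} the critical points of $\mathcal{P}$ on $\mathbf{S}$ are exactly the tangential polygons lying in $\mathbf{S}$, i.e. those tangential polygons satisfying the normalization $|\mathcal{A}|=1$ that cuts $\mathbf{S}$ out of $\widetilde{\mathbf{S}}$. Thus the only questions are how many tangential polygons survive this normalization and in which of $\mathbf{S}_\pm$ they sit.

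First I would invoke that, up to positive dilation, there is exactly one pair $\{Q,-Q\}$ of tangential polygons (the two members correspond to the inscribed circle lying to the left or to the right of all the $\vec e_i$, i.e. to the two signs of the radius $r$). A positive dilation by $\lambda$ scales the area by $\lambda^2$, so if $\mathcal{A}(Q)=0$ the area remains $0$ under every dilation and $|\mathcal{A}|=1$ can never be achieved; hence $\mathbf{S}$ contains no tangential polygon and, by Theorem \ref{ThmCritPer}, $\mathcal{P}$ has no critical points. If instead $\mathcal{A}(Q)\neq 0$, there is a unique $\lambda=|\mathcal{A}(Q)|^{-1/2}$ bringing each of $Q$ and $-Q$ into $\mathbf{S}$, which yields exactly two critical points.

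Finally I would locate them: since $\mathcal{A}(-Q)=\mathcal{A}(Q)$, the two critical points share the same sign of area and therefore lie both in $\mathbf{S}_+$ or both in $\mathbf{S}_-$; they are nevertheless distinct points of $\mathbf{S}$ because $r(-Q)=-r(Q)\neq 0$ (the radius is nonzero precisely because the area is nonzero, via Lemma \ref{APr}). I do not expect a genuine obstacle here; the only point demanding care is the bookkeeping around dilation and central symmetry—insisting that the dilations be positive, so that the sign of the area is preserved and the normalized representative is unique, and checking that $Q$ and $-Q$ remain distinct after normalization. Both are immediate from the relations $r(-Q)=-r(Q)$ and $\mathcal{A}(-Q)=\mathcal{A}(Q)$ recorded just above the corollary, so the argument stays at the level of this combinatorial bookkeeping.
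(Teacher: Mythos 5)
Your argument is correct and is essentially the one the paper intends: the corollary is stated with \qed precisely because it follows immediately from Theorem \ref{ThmCritPer} together with the remark just above it (uniqueness up to dilation of the pair $\{Q,-Q\}$, the relations $\mathcal{A}(-Q)=\mathcal{A}(Q)$, $r(-Q)=-r(Q)$, and the normalization $|\mathcal{A}|=1$). Your extra bookkeeping about positive dilations scaling area by $\lambda^2$ and the distinctness of the two normalized representatives is exactly the right way to make that implicit step explicit.
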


\medskip

A configuration space $\mathbf{S}(s_1,...,s_n)$ with no critical points is called\textit{ exceptional}.

\section{Morse index of a tangential polygon}
Now compute the Hessian matrix of $\mathcal{P}$ at a critical point. Assume that $Q$ is a tangential polygon. In notation of the previous section, we have:
\[
\diffp[2]{\mathcal{P}}{{r_j}}(Q)=p_1\diffp[2]{r_1}{{r_j}}=p_1\diffp{}{{r_j}}\Big(-\frac{p_j r_j}{p_1 r_1}\Big)=-p_j\Big(\frac1{r_1}-\frac{r_j}{r_1^2}\diffp{r_1}{{r_j}}\Big)=-\frac{p_j}{r_1}\Big(1+\frac{p_j r_j^2}{p_1 r_1^2}\Big).
\]

Since at the critical point all \(r_i\) are equal,
\[
H_{jj}(Q)=\diffp[2]{\mathcal{P}}{{r_j}}(Q)=-\frac{p_j}{r p_1}(p_1+p_j).
\]
In the same way, for \(j\neq k\)
\[
\diffp{\mathcal{P}}{{r_j}{r_k}}=p_1\diffp{r_1}{{r_j}{r_k}}=p_1\diffp{}{{r_k}}\Big(-\frac{p_j r_j}{p_1 r_1}\Big)=-p_j r_j\Big(-\frac1{r_1^2}\cdot \diffp{r_1}{{r_k}}\Big)=-\frac{p_j r_j}{r_1^2}\cdot \frac{p_k r_k}{p_1 r_1}.
\]

Thus: $$ H_{jk}(Q)=\diffp{\mathcal{P}}{{r_j}{r_k}}=-\frac{p_j p_k}{r p_1}.$$

To compute its determinant we do the following:
\begin{enumerate}
\item add all the columns to the first one;
\item subtract the first row from the \(i\)-th row $(i=2,3,...,n-2)$ taken with the coefficient \(\frac{p_{i+1}}{p_2}\) ;
\item subtract all rows from the first row with coefficient \(\frac{p_2}{p_1}\).
\end{enumerate}

We get:

\begin{equation}\label{det}
r^{n-3}\det(H)=
\begin{vmatrix}
-\frac{p_2}{p_1}\Pi & 0 & \ldots & 0 \\
0 & -p_3 & \ldots & 0 \\
& & \ddots \\
0 & \ldots & 0 & -p_{n-2}
\end{vmatrix},
\end{equation}

where \(\Pi=p_1+\ldots+p_{n-2}=\frac{\mathcal{P}(Q)}r\).

\begin{definition}
  For an ordered pair of  slopes $\vec{s}_i$ and $\vec{s}_j$ we say that we have the \textit{right turn}  (\textit{left turn}, respectively), if $\vec{s}_j$ is obtained from $\vec{s}_i$  by a clockwise (counterclockwise, respectively)  turn by an angle smaller than $\pi$, see Fig. \ref{FigTurn}.

  The\textit{ number of right turns} \(RT=RT(\vec{s_1},\ldots,\vec{s_n})\) for a slope collection $(\vec{s_1},\ldots,\vec{s_n})$ is
the number of right turns of the pairs $(\vec{s_1},\vec{s_2}),(\vec{s_2},\vec{s_3}),...,(\vec{s_{n}},\vec{s_1}) . $
The\textit{ number of left turns} \(LT\) is defined analogously.
\end{definition}

\begin{figure}[h]
\centering \includegraphics[width=8 cm]{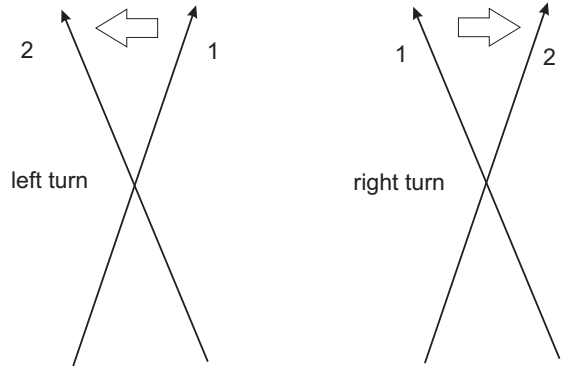}
\caption{Right and left turns}\label{FigTurn}
\end{figure}

\begin{theorem}\label{ThmMorseTangential}
Assume that for a tangential polygon $Q\in \mathbf{S}$,  the radius \(r\) of the inscribed circle $\sigma(Q)$ is positive.
{Then $Q$ is a Morse point, and its Morse index of $\mathcal{P}$ is equal to}
\begin{equation}\label{index}
\mu_Q(\mathcal{P})= RT-1+2\omega_Q-\left\{
       \begin{array}{ll}
    1    &\hbox{if }\   \mathcal{P}(Q)>0; \\
        0 & \hbox{otherwise}.
       \end{array}
     \right.
\end{equation}
In the case of negative radius it is equal to

\[
\mu_Q(\mathcal{P})=n-3-\mu_{-Q}(\mathcal{P})=LT-1-2\omega_Q-\left\{
       \begin{array}{ll}
    1    &\hbox{if }\   \mathcal{P}(Q)>0; \\
        0 & \hbox{otherwise}.
       \end{array}
     \right.
\]
\end{theorem}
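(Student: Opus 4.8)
The plan is to read off the Morse index from the signs of the leading principal minors of the Hessian. Since $\dim\mathbf{S}=n-3$, the matrix $H=(H_{jk})$ computed above is $(n-3)\times(n-3)$, indexed by $j,k\in\{2,\dots,n-2\}$, and has the shape of a diagonal matrix plus a rank-one term: writing $p=(p_2,\dots,p_{n-2})^\top$ and $D_p=\mathrm{diag}(p_2,\dots,p_{n-2})$, one has $H=-\tfrac1r\bigl(D_p+\tfrac1{p_1}\,pp^\top\bigr)$. Because $r>0$, the Morse index of $\mathcal P$ at $Q$ equals the number of negative eigenvalues of $H$, which by Jacobi's criterion equals the number of sign changes in the sequence $1=\Delta_0,\Delta_1,\dots,\Delta_{n-3}$ of leading principal minors, provided none of them vanishes. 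I would first note that the vanishing of a leading minor is a non-generic condition (and, if needed, re-choose which edge plays the role of $e_1$ in the fan triangulation, which changes the coordinates but not the intrinsic index); non-vanishing of all minors simultaneously also certifies that $Q$ is a Morse point.

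Next I would compute the minors explicitly. The $m\times m$ leading submatrix has the same diagonal-plus-rank-one shape, so the row/column operations used for \eqref{det} (equivalently, the matrix determinant lemma) give
\[
\Delta_m=\frac{(-1)^m}{r^m p_1}\Bigl(\prod_{j=2}^{m+1}p_j\Bigr)\,\Pi_m,\qquad \Pi_m:=p_1+\dots+p_{m+1}.
\]
Now I translate the factors into combinatorics. By Lemma~\ref{APr} applied to the normalized triangle $\tfrac1{r_i}Q_{1,i+1,i+2}$ together with Example~\ref{triangle}, $\sign p_j=+1$ exactly when $t(s_1,s_{j+1},s_{j+2})=2\pi$ and $-1$ when it equals $\pi$; and by Lemma~\ref{sum}, $\sign\Pi_m=\sign\mathcal P(Q_{1,\dots,m+3})\eqdef\tau_m$. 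Summing the triangle recursion of Lemma~\ref{turn}(2) gives $t(s_1,\dots,s_{m+3})=k_m\pi$ with $k_m=1+\#\{j\le m+1:\sign p_j=+1\}$, and a short parity computation collapses the product of signs to the clean form $\sign\Delta_m=(-1)^{k_m}\tau_m$ (valid also for $m=0$, where it returns $+1$).

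It then remains to count sign changes in $(-1)^{k_m}\tau_m$. At step $m$ the exponent $k_m$ increases by $1$ precisely when the newly adjoined triangle $Q_{1,m+2,m+3}$ has $t=2\pi$ (i.e. $p_{m+1}>0$), while $\tau_m$ can change only when the growing partial perimeter $\Pi_m=\Pi_{m-1}+p_{m+1}$ crosses zero. A case check shows a minor sign change occurs exactly at a $p_{m+1}>0$ step with no crossing, or at a $p_{m+1}<0$ step with a (downward) crossing. Bookkeeping the up-crossings $X_+$ (forced at $p_{m+1}>0$ steps) and down-crossings $X_-$ (at $p_{m+1}<0$ steps), the number of sign changes equals $(k_{n-3}-k_0)-X_++X_-$; since $X_+-X_-=\mathbbm{1}[\mathcal P(Q)>0]-\mathbbm{1}[\tau_0=+1]$ and the base triangle satisfies $\mathbbm{1}[\tau_0=+1]=k_0-1$, this telescopes to
\[
\mu_Q(\mathcal P)=k_{n-3}-1-\mathbbm{1}[\mathcal P(Q)>0].
\]

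Finally I would supply the bridge identity $t(s_1,\dots,s_n)=\pi\,(RT+2\omega_Q)$: writing each directed turn $\vec s_i\to\vec s_{i+1}$ as an angle in $(-\pi,\pi)$, the undirected angle $\angle(s_i,s_{i+1})$ exceeds it by $\pi$ exactly at a right turn, so summing over the cycle converts the total directed turning $2\pi\omega_Q$ into $t=2\pi\omega_Q+\pi\,RT$; hence $k_{n-3}=RT+2\omega_Q$, giving the first formula. The negative-radius case follows from the central symmetry $Q\mapsto -Q$, a diffeomorphism of $\mathbf S$ with $\mathcal P\circ(-\mathrm{id})=-\mathcal P$, whence $\mathrm{Hess}_{-Q}(\mathcal P)=-\mathrm{Hess}_Q(\mathcal P)$ and $\mu_Q(\mathcal P)=(n-3)-\mu_{-Q}(\mathcal P)$; substituting the first formula for $\mu_{-Q}$ and using $RT+LT=n$, $\omega_{-Q}=\omega_Q$ and $\sign\mathcal P(-Q)=-\sign\mathcal P(Q)$ yields the $LT$-expression. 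I expect the main obstacle to be precisely this bridge identity and the task of keeping all orientation conventions (signed radius, signed perimeter, signed winding) mutually consistent, together with the genericity bookkeeping guaranteeing no leading minor vanishes; the eigenvalue count itself is then a direct, if delicate, telescoping.
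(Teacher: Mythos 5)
Your proposal is correct and follows essentially the same route as the paper: both arguments apply Jacobi's sign-change criterion to the leading principal minors of the Hessian in the radius coordinates, using the determinant formula \eqref{det} (whose $m$-th minor is $\det H(Q_{1,\ldots,m+2})$), translate the signs of the $p_j$ and of the partial sums $\Pi_m$ into the turning data via Example~\ref{triangle} and Lemma~\ref{turn}, and dispose of the $r<0$ case by the symmetry $Q\mapsto -Q$. The only presentational difference is that you telescope the closed-form minors and state the bridge identity $t(s_1,\ldots,s_n)=\pi(RT+2\omega_Q)$ explicitly, whereas the paper packages the same bookkeeping as an induction on $n$ using the recursions for $RT$ and $\omega$.
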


\begin{example}
  For the polygon depicted in Fig. \ref{FigSuperscribed}  (a), we have $\omega=0$, $RT=2$, and $m=0$.

	For the polygon depicted in Fig. \ref{FigTanMorse}, we have $\omega=1$, $RT=1$, and $m=1$.
\end{example}

\begin{figure}[h]
\centering \includegraphics[width=8 cm]{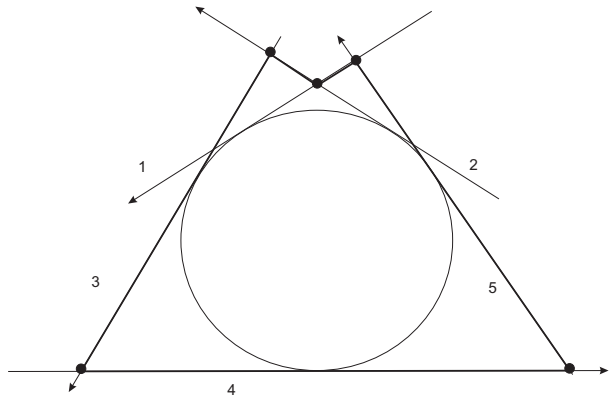}
\caption{This tangential  polygon is a saddle critical point of the perimeter.}\label{FigTanMorse}
\end{figure}
\begin{proof}

For now denote the right-hand side of (\ref{index}) as \(m(Q)\) and assume \(r>0\).

Let us prove (\ref{index}) by induction on \(n\).
For \(n=3\) the value of \(m\) is always zero, so is the Morse index.

Prove the claim for \(n+1\) assuming it is true for all the numbers smaller or equal than \(n\geq 3\).

Recall that the number of negative eigenvalues of the matrix equals the number of sign changes in the sequence of its leading principal minors. The \(k\)-th leading principal minor of \(H(Q)\) is the determinant of \(H(Q_{1,\ldots,k+2})\).

So we have:
\begin{itemize}
\item \(m(Q)=m(Q_{1,\ldots,n})+1\) whenever the sign of the determinant (\ref{det}) is different for \(n\) and \(n+1\);
\item \(m(Q)=m(Q_{1,\ldots,n})\) whenever the sign of the determinant (\ref{det}) is the same for \(n\) and \(n+1\).
\end{itemize}

The change of sign of the determinant (\ref{det}) depends only on the sign of \(p_{n-1}\) and the sign change of \(\Pi\).

Note that
\[
RT(s_1,\ldots,s_{n+1})=RT(s_1,\ldots,s_n)+RT(s_1,s_n,s_{n+1})-1
\]
and
\[
\omega_{Q_{1,\ldots,n+1}}=\omega_{Q_{1,\ldots,n}}+\omega_{Q_{1,n,n+1}}.
\]

Therefore,

\[
m(Q_{1,\ldots,n+1})-m(Q_{1,\ldots,n})=m(Q_{1,n,n+1})+\mathbbm{1}_{\mathcal{P}(Q_{1,n,n+1})>0}-\mathbbm{1}_{\mathcal{P}(Q)>0}+\mathbbm{1}_{\mathcal{P}(Q_{1,\ldots,n})>0}.
\]

The first summand is zero by the base of induction. So we get:
\[
m(Q_{1,\ldots,n+1})-m(Q_{1,\ldots,n})=\mathbbm{1}_{p_{n-1}>0}-\mathbbm{1}_{p_1+\ldots+p_{n-1}>0}+\mathbbm{1}_{p_1+\ldots+p_{n-2}>0},
\]

which is exactly what we require.

The case of \(r<0\) follows from \(\mathcal{P}(-Q)=-\mathcal{P}(Q)\).

\end{proof}

\section{Cyclic polygons and tangential polygons meet}
Theorem \ref{Thm_Morse_closed_plane} motivates the following definition:

\begin{definition}
A cyclic polygon is a \textit{bifurcating polygon} if $\sum_{i=1}^n \varepsilon_i \tan \alpha_i=0$.
\end{definition}
Bifurcating polygons correspond to bifurcations of the area function.

\begin{definition}
Given a cyclic polygon $P=\{v_1,...,v_n\}$, define its \textit{dual  polygon} $P^*$ (Fig. \ref{FigRelation}) as a closed broken line with orientations on the edges  constructed as follows:

\begin{enumerate}
  \item  Take the lines $e_1,...,e_n$ tangential to the circle at the points $\{v_1,...,v_n\}$.
  \item Take the intersection points of $e_i$ and $e_{i+1}$.
  \item  Orient each of the lines so that the circle lies to the left of the line.
\end{enumerate}

The construction can be easily reversed: given a tangential polygon $Q$,  there exists a cyclic polygon $P$ such that $P^*=Q$.
\end{definition}

\begin{lemma}\label{LemmaBifurc}  \begin{enumerate}
                 \item For a cyclic polygon $P$, the perimeter of $P^*$ (in the sense of Definition \ref{DefPerim}) equals $\sum_{i=1}^n \varepsilon_i \tan \alpha_i$.
                 \item In particular, the perimeter of $P^*$ vanishes iff $P$ is a bifurcating polygon.
                 \item  The oriented area of $P^*$ vanishes iff $P$ is a bifurcating polygon.\qed
               \end{enumerate}
\end{lemma}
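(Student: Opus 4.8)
The plan is to prove part (1) by a direct metric computation on the circumscribed circle of $P$, and then read off (2) and (3) as corollaries. First I would fix that circle, with center $O$ and radius $R$, and recall that the vertices of $P^*$ are the consecutive intersections $u_i=e_i\cap e_{i+1}$, where $e_i$ is the tangent line at $v_i$ oriented so that $O$ lies to its left. The crucial elementary observation is that the edge of $P^*$ carried by the line $e_i$ passes through the tangency point $v_i$, so I would split that edge at $v_i$ into the two tangent segments $u_{i-1}v_i$ and $v_iu_i$. From the right triangle $Ov_iu_i$ (right angle at $v_i$, angle $\alpha_i$ at $O$, since $Ou_i$ bisects $\angle v_iOv_{i+1}=2\alpha_i$) one gets $|v_iu_i|=|u_iv_{i+1}|=R\tan\alpha_i$.

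Next I would rewrite the signed perimeter of Definition \ref{DefPerim} as $\mathcal P(P^*)=\sum_i \overrightarrow{u_{i-1}u_i}\cdot\hat e_i$, where $\hat e_i$ is the unit vector of the oriented line $e_i$; this is legitimate because $\overrightarrow{u_{i-1}u_i}$ is parallel to $e_i$, so the dot product reproduces $\sign_{P^*}(i)\,|u_{i-1}u_i|$. Inserting $\overrightarrow{u_{i-1}u_i}=\overrightarrow{u_{i-1}v_i}+\overrightarrow{v_iu_i}$ and regrouping the $2n$ tangent pieces around each vertex $u_i$, the contribution of $u_i$ becomes $\overrightarrow{v_iu_i}\cdot\hat e_i+\overrightarrow{u_iv_{i+1}}\cdot\hat e_{i+1}$, a sum of two tangent segments each of length $R\tan\alpha_i$. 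The heart of the argument is a sign lemma: both signed scalars equal $\varepsilon_i\,R\tan\alpha_i$, i.e. they are positive exactly when $O$ lies to the left of $\overrightarrow{v_iv_{i+1}}$. I would prove this by placing $v_i,v_{i+1}$ in coordinates and checking the two cases $\varepsilon_i=\pm1$ (equivalently, whether $v_{i+1}$ follows $v_i$ counterclockwise or clockwise as seen from $O$); by the left-orientation convention on the $e_i$ the two scalars flip sign together with $\varepsilon_i$. Collecting terms yields $\mathcal P(P^*)=2R\sum_i\varepsilon_i\tan\alpha_i$ with $R>0$; that is, the perimeter is a positive multiple of $\sum_i\varepsilon_i\tan\alpha_i$, matching the stated identity up to the scaling factor and giving exactly the proportionality that parts (2) and (3) require.

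Part (2) is then immediate, since $\sum_i\varepsilon_i\tan\alpha_i=0$ is by definition the bifurcating condition. For part (3) I would invoke Lemma \ref{APr}: because $P^*$ is tangential with inscribed circle exactly the circumscribed circle of $P$, its signed inradius $r(\sigma(P^*))=\pm R$ is nonzero, and Lemma \ref{APr} gives $\mathcal A(P^*)=\tfrac12\mathcal P(P^*)\,r(\sigma(P^*))$. Hence $\mathcal A(P^*)=0$ if and only if $\mathcal P(P^*)=0$, i.e. if and only if $P$ is bifurcating.

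I expect the main obstacle to be the sign bookkeeping in the middle step, namely proving that the two tangent segments at $u_i$ both enter the oriented perimeter with the common sign $\varepsilon_i$, uniformly over convex and non-convex (and multiply-winding) cyclic polygons. The subtlety is that consecutive vertices of such a $P$ need not be in monotone angular order around $O$, so one must treat $\varepsilon_i$ intrinsically, as the position of $O$ relative to the directed edge $\overrightarrow{v_iv_{i+1}}$, and verify that the left-orientation convention on the tangent lines makes the local computation depend only on $\varepsilon_i$ and on $\alpha_i$, independently of the global winding $\omega_P$.
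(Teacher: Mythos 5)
Your proof is correct, and it supplies an argument where the paper gives none: Lemma \ref{LemmaBifurc} is stated with a \qed{} and no proof, so there is nothing in the text to compare against beyond the bare claim. Your route --- writing the signed perimeter as $\sum_i \overrightarrow{u_{i-1}u_i}\cdot\hat e_i$, splitting each edge of $P^*$ at its tangency point $v_i$, and regrouping the $2n$ tangent segments vertex by vertex --- is the natural direct computation, and the sign lemma you isolate (each tangent segment at $u_i$ contributes $\varepsilon_i R\tan\alpha_i$) is exactly the point that needs checking; your coordinate verification of it is sound and, importantly, is local in $i$, so it is indeed insensitive to the global winding of $P$. Two small remarks. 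First, your computation yields $\mathcal P(P^*)=2R\sum_i\varepsilon_i\tan\alpha_i$, not the literal equality asserted in part (1); you are right that the paper's statement is off by the positive factor $2R$ and that this is immaterial for parts (2) and (3), but it is worth saying explicitly that (1) should be read as ``equals up to a positive constant'' (or with the circumscribed circle normalized). Second, your derivation of (3) from Lemma \ref{APr} with $r(\sigma(P^*))=R\neq 0$ is clean and is probably the intended argument; just note the implicit genericity assumption that no two consecutive vertices of $P$ are antipodal, since otherwise some $u_i$ (and $\tan\alpha_i$) is undefined --- the same hypothesis the paper imposes in Corollary \ref{RemFinal}.
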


To summarize, if a polygon $P$ is a bifurcation of the area, then its dual $Q=P^*$  yields an exceptional configuration space $\mathbf{S}(s_1,...,s_n)$.

\begin{figure}[h]
\centering \includegraphics[width=8 cm]{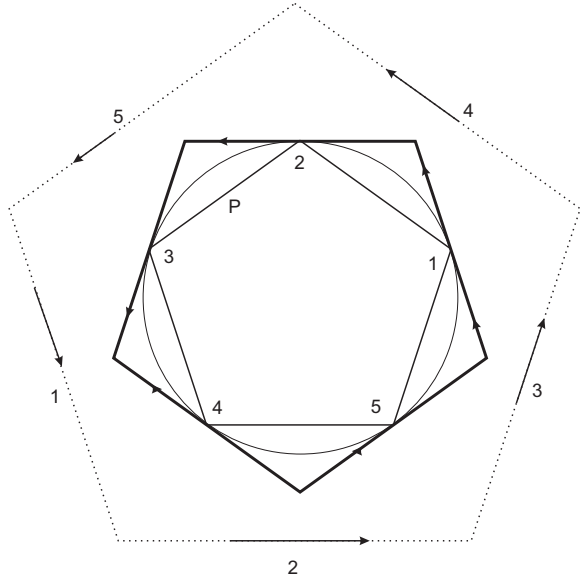}
\caption{ A cyclic polygon $P$, the polygon $P^*$ (bold), and the projectively dual polygon $P^o$ (dashed).}\label{FigRelation}
\end{figure}

\section{Shperical polygons, local projective duality, and an alternative proof of Theorem \ref{Thm_Morse_closed_plane}}

\subsection{Shperical polygons}

A \textit{spherical polygon} is an oriented closed broken line lying on the sphere of radius $R$. We always assume that its edges are the unique shortest geodesics, that is,  $l_i<\pi R$, so a spherical polygon is uniquely defined by the (circular) sequence of its vertices.

One fails to  correctly define the area function on the space of spherical polygons with  prescribed edge lengths. One also fails to define  the space of spherical polygons with  prescribed angles  together with the perimeter function.  However we shall make use of their local versions.

Here is how it goes:

\begin{definition}
  Let $P_0, Q_0$ be  spherical polygons.
  \begin{enumerate}
    \item Consider the space of all spherical polygons with the same edge lengths lying close\footnote{With respect to any reasonable metric} to $P_0$ subject to rotation of the sphere.  This space is  called the \textit{local configuration space }\textit{of spherical polygons with prescribed edge lengths} $\mathbf{L}^{loc}(P_0)$.
    \item Elimination of a point from the sphere allows to define the winding numbers for curves  in the sphere. So fix a point $\infty\notin P_0$ (that is, not lying on the broken line) and define the oriented area  of $P\in \mathbf{L}^{loc}(P_0)$  as the integral
        $$A(P):=\int_{S^2} w(P,x)dx.$$
    \item Analogously, we define the \textit{local configuration space of spherical polygons with prescribed angles} $\mathbf{S}^{loc}(Q_0)$. Once we set  some fixed orientations on the edges, we have a well defined perimeter function $\mathcal{P}$  on the space $\mathbf{S}^{loc}(Q_0)$. Note that the area $\mathcal{A}$ is constant on $\mathbf{S}^{loc}(Q_0)$.
  \end{enumerate}
\end{definition}

\begin{proposition}\label{PropSphCrit}
  Assume that a spherical polygon $P$ fits in a hemisphere not containing $\infty$. $P$  is a critical point of the area function $\mathcal{A}$ iff it is a cyclic polygon, that is, its vertices lie on a circle.
\end{proposition}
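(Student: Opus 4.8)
The plan is to establish the spherical analog of Theorem~\ref{Thm_crirical_are_cyclic} by transporting the variational argument from the planar setting, using the fact that critical points of a constrained function are detected by the vanishing of the first variation along all admissible deformations. First I would set up coordinates: parametrize $\mathbf{L}^{loc}(P_0)$ by the turning angles (equivalently, the positions of the vertices constrained to keep each edge length $l_i$ fixed), so that a tangent vector to the space corresponds to an infinitesimal flex that preserves all edge lengths. The hemisphere hypothesis guarantees that $P$ lies in a region where the winding number and hence $\mathcal{A}(P)$ is well-defined and smooth in $P$, and also lets me work in a single geodesically convex chart where spherical trigonometry is uniform.

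\medskip

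The key computation is the first variation of the spherical oriented area under a length-preserving flex. I would compute $d\mathcal{A}$ at $P$ evaluated on an elementary flex that rotates one diagonal (say rotating the broken subpath around the chord joining two vertices $v_i, v_j$ while holding everything else rigid), just as in the planar proof. Using the spherical area formula (Gauss--Bonnet: area equals angular excess) one finds that $\frac{\partial \mathcal{A}}{\partial \theta}$ along such a flex is a function of the dihedral/angular data at the moving vertices, and it vanishes for \emph{all} admissible flexes precisely when the vertices satisfy a common-circle condition. Concretely, the diagonals $v_iv_{i+1}$ subtend angles whose contribution to $d\mathcal{A}$ cancels iff consecutive vertices see the relevant arcs under equal angles, which is the spherical inscribed-angle condition characterizing concyclicity. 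This is the spherical counterpart of the planar fact that criticality of area forces all vertices onto one circle.

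\medskip

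The main obstacle I anticipate is making the "for all admissible flexes" quantifier rigorous on the sphere: unlike the plane, the space of length-preserving deformations is not a trivial product, and I must verify that the elementary diagonal-flips span the tangent space of $\mathbf{L}^{loc}(P_0)$ at a generic (nondegenerate) $P$, so that the vanishing of $d\mathcal{A}$ on these generators is equivalent to $P$ being critical. I would handle this by the same dimension count that makes $\mathbf{L}$ a manifold in the planar case, checking that the flips give a local frame away from degenerate configurations; the hemisphere assumption keeps us away from antipodal degeneracies where the geodesic edges or the area functional would fail to be smooth.

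\medskip

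Once the first-variation identity is in hand, the equivalence is immediate: the concyclicity condition obtained from $d\mathcal{A}=0$ is symmetric and reversible, so cyclic polygons are exactly the critical points. I would close by remarking that the planar Theorem~\ref{Thm_crirical_are_cyclic} is recovered in the small-radius (large $R$) limit, which both sanity-checks the sign conventions and foreshadows the local projective duality used in the sequel to relate $\mu_P(\mathcal{A})$ with the tangential Morse indices computed in Theorem~\ref{ThmMorseTangential}.
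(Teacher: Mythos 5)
Your overall architecture --- criticality detected by the vanishing of the first variation along diagonal flexes, reducing everything to a four-vertex condition --- is essentially the skeleton of the argument in \cite{khipan} that the paper invokes verbatim for the passage from quadrilaterals to general $n$. But your proposal has a genuine gap at exactly the decisive step: you assert that $d\mathcal{A}=0$ along a flex about a diagonal is equivalent to concyclicity of the relevant vertices because of ``the spherical inscribed-angle condition characterizing concyclicity.'' No such condition is available on the sphere. The planar computation works because $\partial\mathcal{A}/\partial(d^2)=\frac14(\cot\gamma_1+\cot\gamma_2)$ for the two angles opposite the flexing diagonal, and this vanishes iff $\gamma_1+\gamma_2=\pi$, which by the inscribed angle theorem is equivalent to concyclicity; both the inscribed angle theorem and the opposite-angles criterion for cyclic quadrilaterals fail in spherical geometry. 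So the identity you need --- that the first variation of spherical area under a length-preserving flex of a quadrilateral vanishes iff the four vertices lie on a circle --- is precisely the content of the proposition for $n=4$, and you neither carry out the Gauss--Bonnet computation nor could you conclude from it by analogy with the plane.

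The paper's proof is structured specifically to sidestep this. For a convex spherical quadrilateral it invokes the classical extremal property of cyclic configurations (a convex cyclic spherical $4$-gon maximizes or minimizes area among polygons with the given edge lengths). For a self-intersecting quadrilateral --- a case your proposal does not address at all, and where no extremality statement is available --- it adds an auxiliary fifth point on the circumscribed circle and uses the additivity $\mathcal{A}(1234)=\mathcal{A}(1254)-\mathcal{A}(2543)$ to reduce both directions of the equivalence to the convex case. Only then does it pass to general $n$ by the reduction you describe. To repair your proposal you would need either to prove the spherical quadrilateral first-variation identity honestly or to import the convexity-plus-decomposition argument; the ``do the elementary flexes span the tangent space'' issue you flag is real but standard, and is not where the difficulty lies.
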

Proof.
(1) Prove first the statement for  polygons with four edges.
If a polygon bounds a spherically convex  region, then the statement is classical: a cyclic convex  $4$-gon exhibits either the maximum or the minimum point of the area, depending on orientation of $P$.

(''If'') Assume  that $P$ intersects itself  and is a cyclic polygon with vertices $1,2,3,4$. Add a new point $5$ on the circle  together with two new bars as is shown in Fig. \ref{fourgon}.  Then $\mathcal{A}(1234)=\mathcal{A}(1254)-\mathcal{A}(2543)$.  A local flex of the polygon $P$ induces flexes of the polygons
$(1254)$ and $(2543)$. Since the latter are critical, the claim follows.

(''Only if'') Assume that  $P$ intersects itself  and is a  critical point of $\mathcal{A}$, but not a cyclic polygon.  Take a circle superscribing $412$ and add  a new point $5$ on the circle  together with two new bars as we did above.  Now $(125 4)$ is a critical polygon, and  $(2543)$  is not.
 $\mathcal{A}(1234)=\mathcal{A}(1254)-\mathcal{A}(2543)$ completes the proof.

(2) The general case (any number of edges) is obtained by verbatim repeating  the reasonings from \cite{khipan}.\qed
\begin{figure}[h]
\centering \includegraphics[width=4 cm]{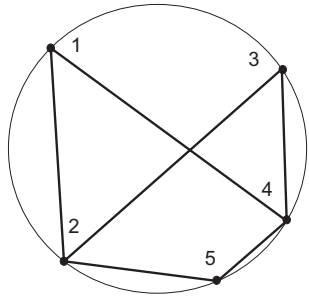}
\caption{Notation for Proposition \ref{PropSphCrit}.}\label{fourgon}
\end{figure}

\subsection*{Local duality of $\mathbf{L}^{loc}(P)$  and $\mathbf{S}^{loc}(P^o)$. }
Assume that $P$ is a spherical cyclic polygon. It fits in a hemisphere, and so do all the polygons from $\mathbf{L}^{loc}(P)  $.
Assume also that a point  $\infty$ lies beyond the hemisphere, so $\mathcal{A}$ is well-defined on $\mathbf{L}^{loc}(P)  $,
and $P$ is a critical point.

\begin{definition}
\begin{enumerate}
  \item In the above setting, define
the dual  polygon $P^0$:

\begin{enumerate}
\item Fix the hemisphere containing P centered at the center of the superscribed circle.
  \item Assume that $P=\{p_1,...,p_n\}$. Take the big circles $C_1,...,C_n$  projectively dual to the points $\{p_1,...,p_n\}$.
  \item Connect the intersection points $q_i$ of $l_i$ and $l_{i+1}$ lying in the hemisphere  by short geodesics.
  \item  Orient each of the lines such that the circle lies on the  {left} from each of the lines.
\end{enumerate}

  \item Continuously extend the duality to $\mathbf{L}^{loc}(P)  $. The extension  is uniquely defined by the condition that edges of the dual polygon lie on dual lines to the vertices of the initial polygon.
\end{enumerate}

\end{definition}
\begin{lemma}\label{LemmaRelAP}For a cyclic $P$,
  \begin{enumerate}
    \item The polygon $P^o$ is  tangential.
  \item \(\mu_{P}(\mathcal{A})=n-3-\mu_{P^o}(\mathcal{P}).\)

  \end{enumerate}
\end{lemma}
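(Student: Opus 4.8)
The plan is to establish the two parts of Lemma~\ref{LemmaRelAP} somewhat separately. Part (1), that $P^o$ is tangential, should follow almost directly from the construction of the dual polygon: the edges of $P^o$ lie on the great circles $C_i$ projectively dual to the vertices $p_i$ of the cyclic polygon $P$. Since $P$ is cyclic, its vertices all lie on one circle; projective duality on the sphere should send this circle of points to a family of great circles all tangent to a common dual circle $\sigma$, and the orientation convention (step (d): the circle lies to the left of each oriented edge) is exactly the tangency-with-consistent-side condition of Definition~\ref{DefTang}. So the first thing I would do is spell out the projective-duality dictionary on $S^2$ carefully enough to see that ``vertices concyclic'' dualizes to ``edges tangent to a fixed circle on the correct side,'' which is precisely tangentiality.

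The real content is part (2), the index relation $\mu_P(\mathcal{A}) = n-3 - \mu_{P^o}(\mathcal{P})$. The strategy I would pursue is to exploit the local duality as a correspondence between the two local configuration spaces $\mathbf{L}^{loc}(P)$ and $\mathbf{S}^{loc}(P^o)$, both of which have dimension $n-3$. Under duality, $\mathcal{A}$ on the first space and (a monotone affine function of) $\mathcal{P}$ on the second should correspond, as the introduction already hints: on the sphere, area dualizes to a linear function of perimeter. The cleanest route is to show that the duality map is a diffeomorphism of the local configuration spaces near the critical points, and that it carries $\mathcal{A}$ to $c - \mathcal{P}$ (or $\mathcal{P} - c$) for a constant $c$, up to a positive affine reparametrization. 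If the correspondence reverses the sign of the function (area $\leftrightarrow$ negated perimeter), then a nondegenerate critical point of index $\mu$ becomes a critical point of index $(n-3) - \mu$ for the dual function, which is exactly the claimed relation.

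Concretely, I would proceed in three steps. First, verify that the local duality is a well-defined diffeomorphism between a neighborhood of $P$ in $\mathbf{L}^{loc}(P)$ and a neighborhood of $P^o$ in $\mathbf{S}^{loc}(P^o)$; this uses that both spaces are smooth manifolds of dimension $n-3$ and that the dual construction is continuous and invertible (the reversing construction is indicated in the paper). Second, establish the precise functional relationship between $\mathcal{A}(P)$ and $\mathcal{P}(P^o)$ on the sphere, analogous to the planar Lemma~\ref{APr}; the spherical version of the area--perimeter relation is the Gauss--Bonnet-type identity that on $S^2$ the area of a region and the perimeter of its polar dual are linked by an affine relation. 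Third, combine nondegeneracy (so the Hessian is invertible) with the orientation-reversing character of the correspondence: a diffeomorphism pulling back a Morse function $f$ to $c - f$ sends the index $\mu$ of $f$ at a point to $(\dim) - \mu$ for the pulled-back function, since it flips the signs of all Hessian eigenvalues.

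\textbf{The main obstacle} will be step two: pinning down the exact affine relation between area on $\mathbf{L}^{loc}(P)$ and perimeter on $\mathbf{S}^{loc}(P^o)$ under spherical projective duality, and in particular checking that the proportionality constant carries the \emph{correct sign} so that the function is negated rather than merely affinely reparametrized by a positive factor. Only sign-reversal produces the $n-3-\mu_{P^o}$ complementation; a positive affine change would preserve the index. I would also need to confirm that nondegeneracy of the critical point is preserved under the duality diffeomorphism, which follows once the functional relation and the diffeomorphism property are both in hand, since an invertible linear change of the Hessian together with an overall sign flip keeps the Hessian nondegenerate and exactly complements the count of negative eigenvalues.
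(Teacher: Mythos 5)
Your proposal matches the paper's proof in substance and structure: the paper likewise treats (1) as an immediate consequence of the duality construction, and proves (2) by noting that spherical projective duality exchanges edge lengths with exterior angles (hence identifies the two local configuration spaces) and that $\mathcal{A}(P)=\mathrm{Const}-R\cdot\mathcal{P}(P^o)$, so the sign reversal complements the Morse index to $n-3$. The Gauss--Bonnet-type affine relation with negative coefficient that you flag as the main obstacle is exactly the identity the paper invokes.
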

Proof. (1) is straightforward.

  Projective duality (on the unit sphere) takes edge lengths of a polygon to the exterior angles of the dual, and vice versa.
  Since
  $$\mathcal{A}(P)=Const -R\cdot \mathcal{P}(P^o),$$
  the claims (2) follows.\qed

\bigskip

\subsection*{Morse indices: planar vs spherical.}

Let $P_0$ be a planar cyclic polygon. It is uniquely defined by the circumscribed circle $\sigma$ and the ordered sequence of its points. Put the circle $\sigma$ with the $n$ points on the sphere of radius $R$, provided that $R>r(\sigma)$. It defines a spherical cyclic polygon $P_0^R$ fitting in the hemisphere centered at the center of  $\sigma$. In turn, the spherical polygon $P_0^R$  gives rise to the local configuration space $\mathbf{L}^{loc}(P_0^R)$ of polygons  with prescribed edge lengths. Clearly, the bigger $R$ is, the smaller is the distortion of edge lengths and angles.

\begin{lemma}\label{LemmaBifurkFree}
The Morse index of  the spherical polygon $P_0^R$ with respect to the area function $\mathcal{A}_R$ is the same as the Morse index of $P_0$.
\end{lemma}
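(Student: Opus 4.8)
The plan is to show that as the sphere radius $R$ grows, the spherical configuration $P_0^R$ deforms continuously back toward the planar $P_0$ without ever passing through a bifurcation, so the Morse index cannot jump. First I would make precise the statement that increasing $R$ gives a continuous one-parameter family: the circle $\sigma$ of radius $r(\sigma)$ together with the $n$ marked points, placed on the sphere of radius $R$, determines edge lengths $l_i(R)$ and angles that vary smoothly in $R$, and as $R\to\infty$ these converge to the planar data of $P_0$. Thus we obtain a smooth path in the space of admissible edge-length tuples, with one endpoint the spherical problem at finite $R$ and the other endpoint (in the limit) the planar problem. Along this whole path, the cyclic polygon $P_0^R$ (respectively $P_0$) remains a critical point of the area function by Proposition~\ref{PropSphCrit} (respectively Theorem~\ref{Thm_crirical_are_cyclic}), because the vertices stay concyclic by construction.

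The key step is then a non-degeneracy / no-bifurcation argument. The Morse index of a critical point is locally constant along a family of Morse functions provided the critical point persists and its Hessian stays non-degenerate; the index can only change when an eigenvalue of the Hessian crosses zero, i.e. at a bifurcation. So I would argue that no bifurcation occurs along the path $R\in(R_0,\infty]$. Here I would invoke the characterization of bifurcations already available: bifurcations are detected by the vanishing of the quantity $\sum_i \varepsilon_i\tan\alpha_i$ (equivalently, by the perimeter of the dual tangential polygon vanishing, cf. Lemma~\ref{LemmaBifurc} in the planar case and Lemma~\ref{LemmaRelAP} via the duality $\mu_P(\mathcal{A})=n-3-\mu_{P^o}(\mathcal{P})$). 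The claim is that this bifurcation invariant does not change sign as $R$ varies: the half-angles $\alpha_i$ and orientations $\varepsilon_i$ are determined by the fixed combinatorial/metric data of $\sigma$ and the marked points, which are held constant, only their spherical realization scales with $R$. Since $P_0$ is assumed generic (hence non-bifurcating), the invariant is nonzero at $R=\infty$, and by continuity it stays nonzero for all large $R$, giving a genericity interval $(R_0,\infty]$ on which the family is bifurcation-free.

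I would then conclude by the standard fact that, on a bifurcation-free continuous family of Morse functions whose critical point $P_0^R$ varies continuously and stays non-degenerate, the Morse index is constant; taking the limit $R\to\infty$ identifies $\mu_{P_0^R}(\mathcal{A}_R)$ with $\mu_{P_0}(\mathcal{A})$. The main obstacle I anticipate is making the limit $R\to\infty$ rigorous as an honest endpoint of the family rather than merely a limit of finite-$R$ problems: one must verify that the local configuration spaces $\mathbf{L}^{loc}(P_0^R)$ and the planar $\mathbf{L}$ fit into a single smooth family (with matching dimension $n-3$ after factoring out isometries), that the area functions $\mathcal{A}_R$ converge in $C^2$ to $\mathcal{A}$ near the critical point, and that the critical point persists throughout. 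Controlling the $C^2$ convergence of the Hessians — so that non-degeneracy is genuinely preserved and the eigenvalues are bounded away from zero uniformly for large $R$ — is the delicate estimate; once that is in place, the invariance of the index is immediate.
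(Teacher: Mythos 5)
Your overall strategy coincides with the paper's: realize $P_0^R$ as a one-parameter family of critical points, show the family is bifurcation-free, conclude the index is constant, and handle the limit $R\to\infty$ by $C^2$-convergence of the area functions. The divergence — and the gap — is in how you rule out bifurcations. You invoke the criterion that bifurcations of the area are detected by the vanishing of $\sum_i \varepsilon_i\tan\alpha_i$ (equivalently, of the perimeter of the dual tangential polygon). That criterion is established in \cite{panzh} only for the \emph{planar} area function on $\mathbf{L}$; nothing in the paper or its references asserts it for the spherical area functions $\mathcal{A}_R$ on $\mathbf{L}^{loc}(P_0^R)$, which is where your family actually lives for all finite $R$. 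You would need to prove a spherical analogue (identifying the correct spherical bifurcation invariant and checking its sign is controlled by the planar one), which is a substantial missing step. There is also a circularity concern: this entire section is advertised as an \emph{alternative} proof of Theorem~\ref{Thm_Morse_closed_plane}, whose original proof "comes from bifurcation analysis" in \cite{panzh}; importing the bifurcation criterion from that same circle of results undercuts the independence of the argument.

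The paper avoids all of this with a more elementary observation: by Proposition~\ref{PropSphCrit}, the critical points of $\mathcal{A}_R$ on $\mathbf{L}^{loc}(P_0^R)$ are exactly the cyclic configurations, and $P_0^R$ is the \emph{unique} critical point in its neighborhood for every $R$; a critical point that persists in isolation along the whole family cannot bifurcate, so its index is constant. If you want to keep your route, you must either prove the spherical bifurcation criterion or replace it by this uniqueness argument. A secondary point: your continuity argument only yields a bifurcation-free interval $(R_0,\infty]$ for some large $R_0$, whereas the lemma is stated for every admissible $R>r(\sigma)$ (and the subsequent proof of Theorem~\ref{Thm_Morse_closed_plane} shrinks $R$ toward $r(\sigma)$); the uniqueness argument covers the full range uniformly. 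Your closing discussion of the $C^2$-convergence needed to pass to the planar limit is correct and corresponds to what the paper dismisses as "standard arguments."
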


 {Proof.
We have the one parametric family of spherical polygons and their local configuration spaces $P_0^R,\mathbf{L}^{loc}(Q_0^R)$. The area function $\mathcal{A}_R$  is well defined  on $\mathbf{L}^{loc}(P_0^R)$, and $P_0^R$ is its critical point.   As the radius $R$ tends to infinity, the polygon  $P_0^R$ deforms and tends to $P_0$.  Since $P_0^R$ is  the unique critical point in the neighborhood, $P_0^R$ does not bifurcate, so its Morse index does not change.  Besides, by standard arguments, the Morse index of $P_0^R$  converges to the Morse index of the planar polygon $P_0$. \qed

\bigskip

\bigskip
Now we are ready to prove  Theorem \ref{Thm_Morse_closed_plane}. Take a planar cyclic polygon $P$.
Take $P^R$ for some big $R$.  By the above lemma, $\mu_{P^R}(\mathcal{A}_R)= \mu_{P}(\mathcal{A})$. Gradually make $R$ smaller, such that the circumscribed circle tends to an equator of the sphere. The Morse index stays the same. Now take the projectively dual polygon  $Q^R:=(P^R)^o$. It is a small (and therefore, almost planar) tangential polygon.

By Lemma \ref{LemmaRelAP}, $$\mu_{P^R}(\mathcal{A}_R)=n-3-\mu_{Q^R}(\mathcal{P}_R).$$
Replace  $Q^R$ by a planar polygon $Q$. On the one hand, the Morse index $\mu_{Q}(\mathcal{P})$ stays the same. On the other hand, we know the Morse index by
Theorem \ref{Thm_Morse_closed_plane}. It remains to observe  that
  local  projective duality  maintains the winding number, and
               takes  {left turns} to positively oriented edges.
            \qed

This approach also gives the following fact which exceeds Theorem \ref{Thm_Morse_closed_plane}:
\begin{corollary}\label{RemFinal}
Assume we have a cyclic polygon $P$ such that (1) no two consecutive vertices are antipodal (with respect to the superscribed circle), and (2) the polygon does fit in a straight line. Then
$P$ is a Morse point of the oriented area function iff it is not a bifurcating polygon.
\end{corollary}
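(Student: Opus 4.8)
The plan is to move the question to the dual tangential polygon, where the Hessian determinant is already computed in (\ref{det}), and then transport non-degeneracy back through the local projective duality used to prove Theorem~\ref{Thm_Morse_closed_plane}. Hypotheses (1) and (2) are exactly what make this transport legitimate: since no two consecutive vertices of $P$ are antipodal, the tangent lines at consecutive vertices meet in finite points and the dual tangential polygon $P^*$ is well defined; and since $P$ is not contained in a line (hypothesis (2)), every constituent triangle $Q_{1,i+1,i+2}$ of $P^*$ is non-degenerate, so each normalizing perimeter $p_i$ --- twice the area of the homothetic triangle of inscribed radius $1$, by Lemma~\ref{APr} --- is nonzero.

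First I would record the planar computation. Evaluating the diagonal determinant in (\ref{det}) for $P^*$ gives
\[
r^{\,n-3}\det H = (-1)^{n-3}\,\frac{p_2}{p_1}\,\Pi\prod_{i=3}^{n-2}p_i, \qquad \Pi=\frac{\mathcal{P}(P^*)}{r}.
\]
Since $r\neq 0$ and every $p_i\neq 0$, this vanishes if and only if $\Pi=0$, i.e.\ if and only if $\mathcal{P}(P^*)=0$; equivalently, $P^*$ is a Morse critical point of $\mathcal{P}$ precisely when $\mathcal{P}(P^*)\neq 0$. By Lemma~\ref{LemmaBifurc}, $\mathcal{P}(P^*)=\sum_{i=1}^n \varepsilon_i\tan\alpha_i$, so $P^*$ is a Morse point of $\mathcal{P}$ if and only if $P$ is not a bifurcating polygon.

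It then remains to see that $P$ is a Morse point of $\mathcal{A}$ if and only if $P^*$ is a Morse point of $\mathcal{P}$. Here I would invoke the duality already in use: on the sphere of radius $R$ it is a local diffeomorphism $\mathbf{L}^{loc}(P^R)\cong\mathbf{S}^{loc}((P^R)^o)$ under which $\mathcal{A}_R = Const - R\,\mathcal{P}_R$, so $Hess_{P^R}(\mathcal{A}_R)$ and $Hess_{(P^R)^o}(\mathcal{P}_R)$ have the same rank; letting $R\to\infty$, the configuration $P^R$ tends to $P$ and $(P^R)^o$ tends to the planar $P^*$, and comparing ranks in the limit yields the equivalence. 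Combined with the previous paragraph, this proves the corollary.

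The delicate point is this passage $R\to\infty$, and it bites only in the borderline, bifurcating case. Away from bifurcation the Morse index is locally constant and Lemma~\ref{LemmaBifurkFree} already carries it to the limit; the extra content needed here is that the \emph{rank} is detected in the limit even when an eigenvalue is on the verge of vanishing. The subtlety is that the intertwining factor $R$ in $\mathcal{A}_R = Const - R\,\mathcal{P}_R$ diverges, so planar $\mathcal{A}$ and planar $\mathcal{P}$ are related only as a limit of spherical identities, and the two local charts must be normalized compatibly in $R$ for both Hessians to converge. I expect this to be the main obstacle. I would resolve it by fixing the $R$-independent radius coordinates $r_i$ on the perimeter side and observing, from (\ref{det}), that in the planar limit $\det H$ is a nonzero multiple of $\Pi=\mathcal{P}(P^*)/r$; its vanishing at $\mathcal{P}(P^*)=0$ is a simple (codimension-one) degeneracy, and such a simple zero of the determinant persists along the smooth $R$-family, so the limiting Hessian is degenerate exactly when $\mathcal{P}(P^*)=0$, as required.
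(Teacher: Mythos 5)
Your proposal is correct and follows essentially the same route as the paper, whose entire proof is the one-line observation that the dual tangential polygon is a non-degenerate Morse point by Theorem~\ref{ThmMorseTangential} (equivalently, that the determinant in (\ref{det}) is a nonzero multiple of $\Pi=\mathcal{P}(P^*)/r$, which vanishes exactly at bifurcating $P$ by Lemma~\ref{LemmaBifurc}). You additionally spell out the transport of (non-)degeneracy through the spherical duality and the $R\to\infty$ limit --- the genuinely delicate step, since Lemma~\ref{LemmaBifurkFree} only covers the non-degenerate case --- which the paper leaves entirely implicit.
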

Proof. The dual polygon is a non-degenerate Morse point, see Theorem \ref{ThmMorseTangential}. \qed

\end{document}